\documentclass[11pt]{article}

\usepackage[T1]{fontenc}
\usepackage{lmodern}
\usepackage{amsmath,amssymb,amsthm,mathrsfs}
\usepackage{microtype}
\usepackage{enumitem}

\providecommand{\texorpdfstring}[2]{#1}

\theoremstyle{plain}
\newtheorem{theorem}{Theorem}[section]
\newtheorem{proposition}[theorem]{Proposition}
\newtheorem{lemma}[theorem]{Lemma}
\newtheorem{corollary}[theorem]{Corollary}
\numberwithin{equation}{section}

\theoremstyle{definition}
\newtheorem{definition}[theorem]{Definition}

\newcommand{\N}{\mathbb{N}}
\newcommand{\Fin}{\mathrm{Fin}}
\newcommand{\I}{\mathcal{I}}
\newcommand{\Istar}{\mathcal{I}^{\star}}
\newcommand{\Iplus}{\mathcal{I}^{+}}
\newcommand{\cl}{\operatorname{cl}}
\newcommand{\almost}{\subseteq^{\ast}}
\newcommand{\pminus}{P^{-}}
\newcommand{\HCF}{\mathsf{HCF}}

\title{\texorpdfstring{$\sigma$}{sigma}-hereditarily closure-preserving \texorpdfstring{$\mathcal I$}{I}-$sn$-networks, \texorpdfstring{$\mathcal I$}{I}-$sn$-metrizability, and local HCP-finiteness}
\author{Xing-Yu Hu\\
School of Mathematics and Statistics,\\
Hanjiang Normal University, Shiyan 442000, China\\
\texttt{huxingyu@hjnu.edu.cn}}
\date{}

\begin{document}
\maketitle

\begin{abstract}
Let $\I$ be an admissible ideal on $\N$. Zhou--Liu--Liu--Lin asked whether every regular space with a $\sigma$-hereditarily closure-preserving $\I$-$sn$-network is $\I$-$sn$-metrizable. The answer is affirmative for every admissible ideal. The proof rests on a stronger local countability lemma. If $y_n\to_\I x$ in a space $X$ and $\{n:y_n\ne x\}\in\Iplus$, then every point-discrete family of $\I$-sequential neighborhoods of $x$ is countable. In a $T_1$ space this applies in particular to every HCP family. If no $\I$-convergent sequence to $x$ has $\I$-positive support away from $x$, then the singleton $\{x\}$ is itself an $\I$-sequential neighborhood. Consequently every space with a $\sigma$-point-discrete $\I$-$sn$-network is $\I$-$snf$-countable. For a regular space with a $\sigma$-HCP $\I$-$sn$-network, Ge's characterization of $sn$-metrizability and Zhou--Liu--Liu--Lin's characterization of $\I$-$sn$-metrizability then settle Problem~5.2 affirmatively.

The countability bound cannot in general be strengthened to finiteness. A countable Rudin--Keisler antichain of free ultrafilters gives an admissible ideal $\I$ for which $\HCF(\I)$ fails. Thus local HCP-finiteness is not automatic for admissible ideals. Every $\pminus$ ideal has local HCP-finiteness. Beyond this basic class, the paper develops permanence results for Fubini products, heterogeneous Fubini sums with cross-row Kat\v{e}tov absorption, and suitable increasing unions. These results establish local HCP-finiteness for the finite Fubini powers of $\Fin$, Kat\v{e}tov's first limit-stage ideal $\Fin^\omega$ and its finite successors, the tree-derived rank-$\omega$ ideal $\mathcal H_{<\omega}$, and the inductive-limit ideal $\Fin_\omega$. All of the listed examples except $\Fin$ fail $\pminus$.
\end{abstract}

\medskip
\noindent\textbf{MSC 2020.} Primary 54E35; Secondary 54A20, 54D55, 54D70, 03E05, 03E15.\\
\noindent\textbf{Key words.} admissible ideal, ideal convergence, $\I$-$sn$-network, $\I$-$sn$-metrizability, point-discrete family, hereditarily closure-preserving family, local HCP-finiteness, Fubini sum, $K$-uniform ideal, $\pminus$.

\section{Introduction}

Throughout, spaces are $T_1$ unless explicitly stated otherwise, and $\N=\{0,1,2,\ldots\}$. On any countably infinite carrier, $\Fin$ denotes the ideal of finite subsets of that carrier. An \emph{admissible ideal} $\I$ on $\N$ satisfies
\[
\Fin\subseteq\I\subsetneq\mathcal P(\N),
\]
and is closed under subsets and finite unions. Put
\[
\Istar=\{A\subseteq\N:\N\setminus A\in\I\},
\qquad
\Iplus=\mathcal P(\N)\setminus\I.
\]
A sequence $(x_n)$ in a space $X$ is \emph{$\I$-convergent} to $x\in X$, written $x_n\to_\I x$, if
\[
\{n\in\N:x_n\notin U\}\in\I
\]
for every neighborhood $U$ of $x$. Ideal convergence for sequences in metric spaces was introduced by Kostyrko, \v{S}al\'at and Wilczy\'nski \cite{KSW2001}. Lahiri and Das \cite{LahiriDas2005} extended it to topological spaces. Zhou, Liu and Lin studied topological spaces determined by $\I$-convergence \cite{ZhouLiuLin2020}. Lin investigated $\I_{sn}$-open sets, $\I$-neighborhood spaces and $\I$-quotient spaces \cite{Lin2021}. Zhou--Lin related $\I$-$cs$-networks to images of metric spaces under $\I$-covering maps \cite{ZhouLin2022}. Liu, Lin and Zhou obtained metric-image characterizations for spaces defined by ideal convergence \cite{LiuLinZhou2024}.

A set $P\subseteq X$ is an \emph{$\I$-sequential neighborhood} of $x$ if $x_n\to_\I x$ implies $\{n:x_n\notin P\}\in\I$ \cite[Definition~2.5]{ZhouLiu2020}. An \emph{$\I$-$sn$-network} is a family $\mathscr P=\bigcup_{x\in X}\mathscr P_x$ such that, for each $x\in X$, every member of $\mathscr P_x$ contains $x$, $\mathscr P_x$ is directed downward under finite intersections, $\mathscr P_x$ is a local network at $x$, and every member of $\mathscr P_x$ is an $\I$-sequential neighborhood of $x$ \cite[Definition~5.1]{ZLZ2023}. Lin introduced ordinary $sn$-networks \cite{Lin1996}, and Lin--Yan used them in the study of sequence-covering maps of metric spaces \cite[Definition~4.2]{LinYan2001}. Ge studied $sn$-metrizability and countable $sn$-networks \cite{Ge2002,Ge2003,Ge2004}. Luo \cite{Luo2005} proved a mapping theorem for $sn$-metrizable spaces and compared several countability conditions on $sn$-networks. Ge--Lin characterized $g$-metrizable spaces by images of semi-metric spaces \cite{GeLin2007}, and Lin--Ge gave image characterizations of $sn$-metrizable spaces \cite{LinGe2019}. Lin--Yun \cite{LinYun2016} give general background on generalized metric spaces. Lin--Zhang \cite{LinZhang2024} survey point-countable covers, sequence-covering mappings and HCP families.

Following Zhou--Lin--Zhang \cite[Definition~4.1]{ZLZ2023}, a space is \emph{$\I$-$snf$-countable} if each point has a countable local network consisting of $\I$-sequential neighborhoods. Zhou--Liu--Liu--Lin call a space \emph{$\I$-$sn$-metrizable} if it is regular and has a $\sigma$-locally finite $\I$-$sn$-network \cite[Definition~2.4]{ZLLL2024}. They posed the following problem \cite[Problem~5.2]{ZLLL2024}.

\begin{quote}\small
\textbf{Problem 5.2.} Let $X$ be a regular space with a $\sigma$-hereditarily closure-preserving $\I$-$sn$-network. Is $X$ $\I$-$sn$-metrizable?
\end{quote}

A family $\mathscr A$ of subsets of $X$ is \emph{closure-preserving} if $\cl(\bigcup\mathscr B)=\bigcup_{B\in\mathscr B}\cl B$ for every subfamily $\mathscr B\subseteq\mathscr A$. It is \emph{hereditarily closure-preserving} (HCP) if, for every choice of subpieces $H(A)\subseteq A$ $(A\in\mathscr A)$, the family $\{H(A):A\in\mathscr A\}$ is closure-preserving \cite[Definition~2.1]{GeShenYing2007}. A family $\mathscr A$ is \emph{point-discrete}, also called weakly hereditarily closure-preserving, if $\{a_A:A\in\mathscr A\}$ is closed discrete for every choice $a_A\in A$ $(A\in\mathscr A)$ \cite[p.~199]{LiuLinLi2012}. A family is \emph{$\sigma$-point-discrete} if it is a countable union of point-discrete families. A family is $\sigma$-HCP if it is a countable union of HCP families. Every subfamily of an HCP family is HCP\@. In a $T_1$ space every HCP family is point-discrete. For every singleton selection, the selected singleton family is closure-preserving. Since singletons are closed in a $T_1$ space, every subset of the selected set is closed in $X$, so the selected set is closed discrete. Classical work on HCP families in metrization and generalized metrization includes Burke--Engelking--Lutzer \cite{BEL1975} and Gruenhage \cite{Gruenhage1984}. For $\sigma$-HCP $k$-networks, Foged showed that a regular space is a closed image of a metric space exactly when it is Fr\'echet and has such a network \cite{Foged1985}. Tanaka showed that a regular space is $g$-metrizable exactly when it is weakly first countable and has such a network \cite{Tanaka1991}. Junnila--Yun gave a necessary and sufficient condition for a regular space with a $\sigma$-HCP $k$-network to be an $\aleph$-space \cite{JunnilaYun1992}. Liu \cite{Liu1993} surveys spaces with $\sigma$-HCP $k$-networks.

The first main result answers Problem~5.2 for every admissible ideal. Its pointwise countability lemma requires only point-discreteness, not HCP\@. Suppose $y_n\to_\I x$ and the support away from $x$, $\{n:y_n\ne x\}$, is $\I$-positive. For a point-discrete family $\mathscr H$ of $\I$-sequential neighborhoods of $x$, consider the indices $n$ for which $y_n$ belongs to infinitely many members of $\mathscr H$. Point-discreteness forces this exceptional index set to belong to $\I$. Outside it, each $y_n$ belongs to only finitely many members of $\mathscr H$, while for every $P\in\mathscr H$ the set $\{n:y_n\in P,\ y_n\ne x\}$ is $\I$-positive. Hence $\mathscr H$ is countable. If no $\I$-convergent sequence at $x$ has $\I$-positive support away from $x$, then $\{x\}$ itself is an $\I$-sequential neighborhood. Applying these two alternatives at each point gives $\I$-$snf$-countability for every space with a $\sigma$-point-discrete $\I$-$sn$-network.

Once $\I$-$snf$-countability is established, existing characterizations complete the metrization argument. Zhou--Lin--Zhang \cite[Lemma~5.2]{ZLZ2023} show that every $\I$-$sn$-network is an ordinary $sn$-network. Hence a $\sigma$-HCP $\I$-$sn$-network is a $\sigma$-HCP ordinary $sn$-network. Ge's characterization \cite[Lemma~2.2]{Ge2003} gives ordinary $sn$-metrizability. Zhou--Liu--Liu--Lin \cite[Theorem~3.6]{ZLLL2024} proved that $\I$-$sn$-metrizability is equivalent to the conjunction of $\I$-$snf$-countability and ordinary $sn$-metrizability. Theorem~\ref{thm:problem52}(2) therefore gives an affirmative answer to Problem~5.2.

At the ordinary level, Ge--Shen--Ge \cite[Theorem~3.1]{GeShenYing2007} work throughout with regular $T_1$ spaces and prove $sn$-first countability for spaces with a $\sigma$-weakly HCP $sn$-network. The weak-HCP condition in their theorem is the point-discrete condition used here. Lin--Shen \cite{LinShen2010} later prove the stronger conclusion that every space with a $\sigma$-point-discrete $sn$-network has a $\sigma$-compact-finite $sn$-network. Since every $\I$-$sn$-network is an ordinary $sn$-network \cite[Lemma~5.2]{ZLZ2023}, these results already give ordinary $sn$-first countability once the ideal structure is forgotten. They do not by themselves give $\I$-$snf$-countability, because the resulting ordinary $sn$-network is not asserted to consist of $\I$-sequential neighborhoods. A particularly close local result is Liu--Lin--Li's Lemma~17(1) \cite{LiuLinLi2012}. Their paper assumes Hausdorff spaces, and the lemma shows that a point-discrete family is finite whenever one nontrivial convergent sequence is eventually contained in every member. For $\I$-sequential neighborhoods, the exceptional index set may instead be infinite and may depend on the member. The local lemma below gives the corresponding countability bound under this weaker control. Ge's earlier HCP subsequence lemma \cite[Lemma~2.4]{Ge2003} is another point-selection precursor.

The countability conclusion cannot in general be strengthened to finiteness. Theorem~\ref{thm:HCF-failure} constructs an admissible ideal $\I$ for which $\HCF(\I)$ fails. The witness consists of a zero-dimensional Hausdorff space $X$, a point $p\in X$, an $\I$-convergent sequence to $p$ with $\I$-positive support away from $p$, and a countably infinite HCP family of $\I$-sequential neighborhoods of $p$. The construction uses a one-point topology defined by an ideal and a countable Rudin--Keisler antichain of free ultrafilters \cite[pp.~199--200]{ShelahRudin1978}.

With Problem~5.2 settled at the countability level, the rest of the paper turns to the stronger local HCP-finiteness property. Every $\pminus$ ideal has local HCP-finiteness by a finite diagonal selection argument. Local HCP-finiteness can nevertheless persist through constructions that need not preserve $\pminus$. A Fubini product $\mathcal K\otimes\mathcal J$ has local HCP-finiteness whenever $\mathcal K$ has $\pminus$ and $\mathcal J$ has local HCP-finiteness. A heterogeneous version allows the row ideals to vary under a cross-row Kat\v{e}tov absorption condition. These results give $\HCF(\Fin^\omega)$ at Kat\v{e}tov's first limit stage \cite[p.~240]{Katetov1972} and $\HCF(\Fin^{\omega+r})$ at its finite successors.

A second permanence route comes from increasing unions. If $\mathcal J_0\subseteq\mathcal J_1\subseteq\cdots$ are admissible ideals on one countable carrier and their union is a proper ideal, then local HCP-finiteness passes to the union when cofinally many stages are both $K$-uniform and locally HCP-finite. Applied to the tree-derived hierarchy of Pelayo G\'omez \cite[Theorems~3.4 and~3.5]{PelayoGomez2026}, this gives $\HCF(\mathcal H_{<\omega})$. It also gives $\HCF(\Fin_\omega)$ for the inductive limit in the Debs--Saint Raymond hierarchy \cite[Section~6.2]{DebsSaintRaymond2009}. The finite Fubini powers $\Fin^{\otimes r}$ for $r\ge2$, the ideals $\Fin^{\omega+r}$ for $r<\omega$, $\mathcal H_{<\omega}$, and $\Fin_\omega$ fail $\pminus$. Together, the Fubini and increasing-union routes establish stability of local HCP-finiteness beyond the class of $\pminus$ ideals.

Section~\ref{sec:prelim} collects the ideal-theoretic facts used throughout. Section~\ref{sec:local} isolates universal point-discrete countability, resolves Problem~5.2, and separates that countability phenomenon from the stronger local HCP-finiteness property. Section~\ref{sec:fubini} develops the Fubini permanence route, while Section~\ref{sec:unions} develops the increasing-union route and its two rank-$\omega$ applications.

\section{Preliminaries}\label{sec:prelim}

\subsection{Ideals, convergence, and separation rank}

The general-topology conventions follow Engelking \cite{Engelking1989}. For $A,B\subseteq\N$, write $A\almost B$ if $A\setminus B$ is finite. When an ideal is defined on another countable set such as $\N\times\N$, it is transported to $\N$ by a fixed bijection. The ideal properties considered here are invariant under such reindexing.

For an ideal $\mathcal A$ on a countably infinite carrier $D$, write $\operatorname{dom}(\mathcal A)=D$ and $\mathcal A^+=\mathcal P(D)\setminus\mathcal A$, and let $\mathcal A^\star=\{D\setminus A:A\in\mathcal A\}$ denote its dual filter. The ideal $\mathcal A$ is \emph{admissible} if it contains every finite subset of $D$ and is proper. It is \emph{tall} if every infinite subset of $D$ contains an infinite member of $\mathcal A$. When working on the carrier $D$ itself, a $D$-indexed family $(x_d)_{d\in D}$ in a space $X$ is $\mathcal A$-convergent to $x\in X$ if $\{d\in D:x_d\notin U\}\in\mathcal A$ for every neighborhood $U$ of $x$. A set $P\subseteq X$ is an $\mathcal A$-sequential neighborhood of $x$ if $\{d\in D:x_d\notin P\}\in\mathcal A$ for every such $\mathcal A$-convergent family. This agrees with the preceding $\N$-indexed definitions after reindexing. For such an indexed family, its \emph{support away from $x$} is $\{d\in D:x_d\ne x\}$. Regard $\mathcal P(D)$ as $2^D$ via characteristic functions and equip it with the product topology. For an analytic ideal $\mathcal A$, its \emph{Borel separation rank} $\operatorname{rk}(\mathcal A)$ is the least $\alpha<\omega_1$ for which there is $S\in\boldsymbol\Sigma^0_{1+\alpha}$ with $\mathcal A\subseteq S$ and $S\cap\mathcal A^\star=\varnothing$. This is the dual-ideal form of Debs--Saint Raymond's separation rank \cite[Definition~3.1]{DebsSaintRaymond2009}, under the convention recalled by Kwela \cite[p.~1]{Kwela2021}.

The metrization argument in Section~\ref{subsec:countability} uses Zhou--Lin--Zhang's implication from $\I$-$sn$-networks to ordinary $sn$-networks.

\begin{lemma}[{\cite[Lemma~5.2]{ZLZ2023}}]\label{lem:Isn-implies-sn}
For every admissible ideal $\I$, every $\I$-$sn$-network is an ordinary $sn$-network.
\end{lemma}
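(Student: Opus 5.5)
The plan is to reduce the lemma to a single implication: every $\I$-sequential neighborhood of $x$ is an ordinary sequential neighborhood of $x$. Recall that $P$ is an ordinary sequential neighborhood of $x$ if every sequence converging to $x$ in the usual sense is eventually in $P$. An ordinary $sn$-network requires three properties of each family $\mathscr P_x$: its members contain $x$, it is closed downward under finite intersections, and it is a local network at $x$. These are exactly the first three clauses in the definition of an $\I$-$sn$-network. Only the last clause changes, so once the implication is proved, every $\I$-$sn$-network $\bigcup_x\mathscr P_x$ is an ordinary $sn$-network with the same families $\mathscr P_x$.

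To prove the implication, let $P$ be an $\I$-sequential neighborhood of $x$, and suppose toward a contradiction that some sequence $x_n\to x$ is not eventually in $P$. Then the set $A=\{n:x_n\notin P\}$ is infinite; enumerate it increasingly as $a_0<a_1<\cdots$. Define a new sequence by $y_k=x_{a_k}$. Every term of $(y_k)$ lies outside $P$, and $(y_k)$ converges to $x$ in the usual sense, being a subsequence of a convergent sequence.

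Since $\I$ is admissible, $\Fin\subseteq\I$. Therefore ordinary convergence implies $\I$-convergence: for each neighborhood $U$ of $x$, the set $\{k:y_k\notin U\}$ is finite and hence lies in $\I$. So $y_k\to_\I x$. Because $P$ is an $\I$-sequential neighborhood of $x$, the set $\{k:y_k\notin P\}$ belongs to $\I$. But this set is all of $\N$, and $\N\notin\I$ because $\I$ is proper. This contradiction proves the implication.

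There is no real obstacle in this argument. The one point needing care is the passage to the subsequence indexed by $A$. Applying the $\I$-sequential-neighborhood property directly to $(x_n)$ would only give $A\in\I$, and that is weaker than $A$ being finite. The subsequence converts an infinite exceptional set into the whole index set $\N$, which cannot lie in $\I$; this is where both admissibility conditions ($\Fin\subseteq\I$ and properness) are used.
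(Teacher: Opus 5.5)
Your proof is correct and complete. The first three clauses of the two definitions coincide, and your subsequence argument shows that every $\I$-sequential neighborhood is an ordinary sequential neighborhood: ordinary convergence implies $\I$-convergence because $\Fin\subseteq\I$, and restricting to the infinitely many indices outside $P$ would force $\N\in\I$, contradicting properness. The paper does not prove this lemma itself; it quotes it from \cite[Lemma~5.2]{ZLZ2023}, so there is no in-paper argument to compare against, and your argument is a standard direct proof of the cited fact.
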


\subsection{The \texorpdfstring{$P^{-}$}{P-} property}

Hru\v{s}\'ak, Meza-Alc\'antara, Th\"ummel and Uzc\'ategui \cite[Theorem~3.8]{HrusakEtAl2017} give a Kat\v{e}tov-order characterization of the standard $\pminus$ property. The equivalent partition form in Definition~\ref{def:pminus} appears in Camargo--Uzc\'ategui \cite[Theorem~3.1(v)]{CamargoUzcategui2018} and Uzc\'ategui Aylwin \cite[Theorem~8.2(v)]{Uzcategui2019}.

\begin{definition}\label{def:pminus}
An admissible ideal $\I$ has $\pminus$ if, whenever $S\in\Iplus$ is partitioned as
\[
S=\bigsqcup_{n\in\N}E_n,
\qquad E_n\in\I,
\]
there is $A\in\Iplus$, $A\subseteq S$, such that $A\cap E_n$ is finite for every $n$.
\end{definition}

\begin{lemma}\label{lem:pminus-finite-dual}
For an admissible ideal $\I$, the following are equivalent.
\begin{enumerate}[label=\textup{(\roman*)}]
\item $\I$ has $\pminus$.
\item For every $S\in\Iplus$ and every sequence $(G_n)$ in $\Istar$, there are finite sets
\[
F_n\subseteq S\cap G_n\qquad(n\in\N)
\]
such that $\bigcup_nF_n\in\Iplus$.
\end{enumerate}
\end{lemma}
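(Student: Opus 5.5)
We prove the two implications separately. In both directions we pass between the partition form of Definition~\ref{def:pminus} and the dual-filter form in (ii) by complementation: a set $G\in\Istar$ corresponds to $\N\setminus G\in\I$.

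For (i)$\Rightarrow$(ii), fix $S\in\Iplus$ and $(G_n)$ in $\Istar$. We may shrink each $G_n$, because a finite subset of a smaller set is still admissible. So replace $G_n$ by $G_0\cap\cdots\cap G_n$, which still lies in $\Istar$, and assume the sequence is decreasing. Put $E_0=S\setminus G_0$ and $E_{n+1}=S\cap(G_n\setminus G_{n+1})$, and put $E_\infty=S\cap\bigcap_nG_n$. Each $E_n$ with $n<\omega$ lies in $\I$, since it is contained in $\N\setminus G_n$ or in $\N\setminus G_{n+1}$.

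There are two cases.
\begin{itemize}
\item If $E_\infty\in\Iplus$, then $E_\infty$ is an $\I$-positive subset of every $S\cap G_n$. Admissibility gives $E_\infty$ infinite, so we enumerate $E_\infty=\{e_0,e_1,\dots\}$ and take $F_n=\{e_0,\dots,e_n\}$. Then $\bigcup_nF_n=E_\infty\in\Iplus$.
\item If $E_\infty\in\I$, we fold it into $E_0$, so that $S=\bigsqcup_nE_n$ with every $E_n\in\I$. Definition~\ref{def:pminus} gives $A\subseteq S$ with $A\in\Iplus$ and every $A\cap E_n$ finite. Define $F_n=A\cap E_{n+1}$. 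Then $F_n$ is finite, and $F_n\subseteq S\cap G_n$ because $E_{n+1}\subseteq G_n$. Also $\bigcup_nF_n=A\setminus E_0$, and this is $\I$-positive since $A\cap E_0$ is finite.
\end{itemize}

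For (ii)$\Rightarrow$(i), let $S=\bigsqcup_nE_n$ with $S\in\Iplus$ and each $E_n\in\I$. Set $G_n=\N\setminus(E_0\cup\cdots\cup E_n)$, which lies in $\Istar$ because $\I$ is closed under finite unions. Condition (ii) gives finite sets $F_n\subseteq S\cap G_n$ with $A:=\bigcup_nF_n\in\Iplus$. Clearly $A\subseteq S$. For each $m$, a set $F_n$ can meet $E_m$ only if $n<m$, since $F_n\subseteq G_n$ is disjoint from $E_m$ when $m\le n$. Hence $A\cap E_m\subseteq F_0\cup\cdots\cup F_{m-1}$, which is finite. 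So $\I$ has $\pminus$.

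The only delicate point is the residual set $E_\infty=S\cap\bigcap_nG_n$ in the forward direction. The partition $(E_n)$ need not cover $S$, and the case split above is what handles this. Everything else is bookkeeping with the monotone sequence.
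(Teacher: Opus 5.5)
Your proof is correct and follows essentially the same route as the paper: the same monotone shrinking of $(G_n)$, the same case split on whether $S\cap\bigcap_nG_n$ is $\I$-positive, the residual folded into the first piece of the partition, and the same tail-type choice of $G_n$ for the converse. The differences are only cosmetic: you use increasing initial segments of an enumeration where the paper partitions into finite sets, and your $G_n$ coincides with the paper's $G_{n+1}$.
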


\begin{proof}
Assume first that $\I$ has $\pminus$. Given $S\in\Iplus$ and $G_n\in\Istar$, replace $G_n$ by $G_0\cap\cdots\cap G_n$ and put $T_n=S\cap G_n$, $T=\bigcap_nT_n$. If $T\in\Iplus$, partition $T$ into finite sets $F_n$. Then $F_n\subseteq S\cap G_n$ and $\bigcup_nF_n=T$.

Suppose $T\in\I$. The sets
\[
E_0=(S\setminus T_0)\cup T,
\qquad
E_{n+1}=T_n\setminus T_{n+1}
\]
form a partition of $S$ into members of $\I$. By $\pminus$, choose $A\in\Iplus$, $A\subseteq S$, with $A\cap E_n$ finite for every $n$. Set $F_n=A\cap E_{n+1}$. Then $F_n\subseteq T_n\subseteq S\cap G_n$ and
\[
\bigcup_nF_n=A\setminus(A\cap E_0)\in\Iplus.
\]

Conversely, suppose the finite-dual form holds and write $S=\bigsqcup_nE_n$ with $S\in\Iplus$ and $E_n\in\I$. Put
\[
G_n=(\operatorname{dom}(\I)\setminus S)\cup\bigcup_{i\ge n}E_i.
\]
Then $G_n\in\Istar$. Choose finite $F_n\subseteq S\cap G_n$ with $A=\bigcup_nF_n\in\Iplus$. For each $m$,
\[
A\cap E_m\subseteq\bigcup_{n\le m}F_n,
\]
so $A\cap E_m$ is finite. This proves the partition condition in Definition~\ref{def:pminus}.
\end{proof}

\begin{lemma}\label{lem:block-pminus-obstruction}
Let $\mathcal A$ be an admissible ideal on a countably infinite set $D$, and let
\[
D=\bigsqcup_{m\in\N}C_m.
\]
Suppose $C_m\in\mathcal A$ for every $m$, and every $B\subseteq D$ such that $B\cap C_m$ is finite for every $m$ belongs to $\mathcal A$. Then $\mathcal A$ does not have $\pminus$.
\end{lemma}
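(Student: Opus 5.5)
The plan is to test the partition form of $\pminus$ (Definition~\ref{def:pminus}) on the given partition itself, taking $S=D$ and $E_m=C_m$. Since $\mathcal A$ is proper, $D\notin\mathcal A$, so $S=D\in\mathcal A^+$. By hypothesis each $E_m=C_m$ lies in $\mathcal A$, and the sets $C_m$ partition $D$. Thus $(C_m)_{m\in\N}$ is a legitimate input to the $\pminus$ condition.

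If $\mathcal A$ had $\pminus$, Definition~\ref{def:pminus} would give some $A\in\mathcal A^+$ with $A\subseteq D$ and $A\cap C_m$ finite for every $m$. The second hypothesis of the lemma says that every subset of $D$ meeting each $C_m$ in a finite set belongs to $\mathcal A$. Hence $A\in\mathcal A$, contradicting $A\in\mathcal A^+$. So $\mathcal A$ does not have $\pminus$.

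There is no real obstacle; the argument is a direct unwinding of Definition~\ref{def:pminus}. The only points to check are that properness is what makes $D$ positive, and that the definition, although phrased on $\N$, transfers verbatim to the carrier $D$ under the reindexing convention of Section~\ref{sec:prelim}.
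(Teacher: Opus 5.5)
Your proof is correct, and it is more direct than the paper's. You feed the given partition straight into Definition~\ref{def:pminus} with $S=D$, which is positive by properness, and $E_m=C_m$. The hypothesis that every set meeting each $C_m$ finitely lies in $\mathcal A$ then rules out any candidate $A$.

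The paper instead argues through the finite-dual form. It sets $G_n=\bigcup_{m\ge n}C_m\in\mathcal A^\star$ and observes that for any finite $F_n\subseteq G_n$, the union $\bigcup_nF_n$ meets $C_m$ only inside $F_0\cup\cdots\cup F_m$, so $\bigcup_nF_n\in\mathcal A$. Hence condition (ii) of Lemma~\ref{lem:pminus-finite-dual} fails for the positive set $D$, and the paper concludes via the implication (i)$\Rightarrow$(ii) of that lemma. That is the harder direction of the equivalence, the one with the case split on whether $\bigcap_nT_n$ is positive.

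Your route needs nothing beyond the definition itself. The paper's phrasing only has the cosmetic advantage of matching the finite-selection form in which $\pminus$ is later used, in Lemma~\ref{lem:pminus-HCF} and Theorems~\ref{thm:fubini-lifting} and~\ref{thm:heterogeneous-fubini}.
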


\begin{proof}
Put $G_n=\bigcup_{m\ge n}C_m$. The complement of $G_n$ is a finite union of members of $\mathcal A$, so $G_n\in\mathcal A^\star$. If finite $F_n\subseteq G_n$ are chosen, then for fixed $m$ only $F_0,\ldots,F_m$ can meet $C_m$. Hence $\bigcup_nF_n$ meets every $C_m$ in a finite set and therefore belongs to $\mathcal A$. Applying Lemma~\ref{lem:pminus-finite-dual} with the positive set $D$ shows that $\mathcal A$ does not have $\pminus$.
\end{proof}

In the dual-filter formulation, Debs--Saint Raymond \cite[Lemma~7.4]{DebsSaintRaymond2009} obtain a stronger embedding conclusion under a more general finite-intersection hypothesis. Only the finite-dual consequence in Lemma~\ref{lem:block-pminus-obstruction} is used in Sections~\ref{sec:fubini} and~\ref{sec:unions}.

Two standard conditions stronger than $\pminus$ are the additive property $(AP)$ and $P^+$. The property $(AP)$ means that for every sequence $(E_k)$ in $\I$ there is $E\in\I$ with $E_k\almost E$ for every $k$. Equivalently, $\mathrm{AP}(\I,\Fin)$ holds \cite[Lemma~3.9 and Definition~3.10]{MS2011}. The property $P^+$ means that every decreasing sequence $(S_n)$ in $\Iplus$ has $S\in\Iplus$ with $S\almost S_n$ for all $n$ \cite[p.~2023]{HrusakEtAl2017}.

Both conditions imply $\pminus$. Let $S=\bigsqcup_nE_n\in\Iplus$ with $E_n\in\I$. Under $(AP)$, choose $E\in\I$ with $E_k\setminus E$ finite for every $k$. Then $A=S\setminus E\in\Iplus$ and $A\cap E_k$ is finite. Under $P^+$, put $B_k=\bigcup_{n\ge k}E_n$. Each $B_k$ belongs to $\Iplus$, since $S\setminus B_k$ is a finite union of members of $\I$. Choose $B\in\Iplus$ with $B\almost B_k$ for every $k$ and set $A=B\cap S$. Since $B\almost B_0=S$, the set $B\setminus S$ is finite, and hence $A\in\Iplus$. Also,
\[
A\cap E_k\subseteq B\setminus B_{k+1},
\]
so $A\cap E_k$ is finite.

\subsection{Fubini products and \texorpdfstring{$K$}{K}-uniformity}

For ideals $\mathcal K,\mathcal J$ on countable sets $D$ and $E$, respectively, their \emph{Fubini product} is the ideal on $D\times E$ defined by
\[
A\in\mathcal K\otimes\mathcal J
\quad\Longleftrightarrow\quad
\{d\in D:A_d\notin\mathcal J\}\in\mathcal K,
\]
where $A_d=\{e\in E:(d,e)\in A\}$ \cite{KwelaTryba2017}. If $\mathcal K$ and $\mathcal J$ are admissible, then so is $\mathcal K\otimes\mathcal J$. For $r\ge1$ and ideals $\mathcal I_1,\ldots,\mathcal I_r$, the product is right-associated. For $r=1$ the iterate is $\mathcal I_1$. For $r\ge2$, it is defined by
\[
\mathcal I_1\otimes\cdots\otimes\mathcal I_r
=\mathcal I_1\otimes(\mathcal I_2\otimes\cdots\otimes\mathcal I_r).
\]
When all factors are $\Fin$, write $\Fin^{\otimes r}$ for the $r$-fold Fubini product.

For ideals $\mathcal A$ on $X$ and $\mathcal B$ on $Y$, write $\mathcal A\le_K\mathcal B$ if there is a map $f:Y\to X$ such that $f^{-1}[C]\in\mathcal B$ for every $C\in\mathcal A$. For $A\subseteq X$, let $\mathcal A\!\upharpoonright A=\{C\cap A:C\in\mathcal A\}$, viewed as an ideal on $A$. Following Hru\v{s}\'ak \cite[p.~37]{Hrusak2011}, an ideal $\mathcal A$ is \emph{$K$-uniform} if
\[
\mathcal A\!\upharpoonright A\le_K\mathcal A
\qquad\text{for every }A\in\mathcal A^+.
\]
Zhou--Lin--Zhang \cite[Definition~5.3]{ZLZ2023} use the same condition for ideal convergence. An ideal $\mathcal A$ is \emph{homogeneous} if $\mathcal A\!\upharpoonright A\cong\mathcal A$ for every $A\in\mathcal A^+$ \cite[Definition~1.3]{KwelaTryba2017}. Every homogeneous ideal is $K$-uniform.

\section{Local point-discrete countability and local HCP-finiteness}\label{sec:local}

\subsection{Local point-discrete countability and Problem~5.2}\label{subsec:countability}

The metrization problem requires local countability, not local finiteness. The next lemma isolates a stronger countability phenomenon already forced by point-discreteness along one ideal-convergent sequence.

\begin{lemma}\label{lem:point-discrete-countability}
Let $\I$ be an admissible ideal, let $X$ be a space, not necessarily $T_1$, and let $x\in X$. Suppose
\[
y_n\to_\I x,
\qquad
S=\{n:y_n\ne x\}\in\Iplus.
\]
If $\mathscr H$ is a point-discrete family of $\I$-sequential neighborhoods of $x$, then $\mathscr H$ is countable.
\end{lemma}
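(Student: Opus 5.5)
The plan follows the sketch in the introduction. Let $E=\{n\in S:\ y_n\text{ belongs to infinitely many members of }\mathscr H\}$. The argument has two parts. First, show $E\in\I$. Then show that each $P\in\mathscr H$ meets the sequence on an $\I$-positive subset of $S\setminus E$. Countability of $\mathscr H$ then follows by a counting argument over the countable index set.

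For the first part, suppose toward a contradiction that $E\in\Iplus$. Enumerate $E=\{e_0<e_1<\cdots\}$; it is infinite because $\I$ is admissible and contains $\Fin$. Recursively choose pairwise distinct $H_k\in\mathscr H$ with $y_{e_k}\in H_k$. This is possible because $y_{e_k}$ lies in infinitely many members and only finitely many have been used. Point-discreteness, applied to the selection $a_{H_k}=y_{e_k}$ on the subfamily $\{H_k\}$ (extended arbitrarily to the other members, since point-discreteness passes to subfamilies), says that $\{y_{e_k}:k\in\N\}$ is closed discrete. In particular $D=\{y_{e_k}:k\in\N\}\setminus\{x\}$ is closed, because subsets of closed discrete sets are closed discrete. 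Since $y_{e_k}\ne x$ for all $k$, we have $D=\{y_{e_k}:k\in\N\}$ and $x\notin D$. So $U=X\setminus D$ is a neighborhood of $x$. Then $\{n:y_n\notin U\}\supseteq E\in\Iplus$, which contradicts $y_n\to_\I x$. No $T_1$ assumption is needed here, only that the selected set is closed and avoids $x$. Hence $E\in\I$.

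For the second part, fix $P\in\mathscr H$. Since $P$ is an $\I$-sequential neighborhood of $x$, the set $\{n:y_n\notin P\}$ lies in $\I$. Therefore $S_P=\{n\in S\setminus E:y_n\in P\}$ equals $S$ minus a member of $\I$ minus $E$, and so $S_P\in\Iplus$. In particular $S_P\neq\varnothing$.

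Now count. Every $P\in\mathscr H$ contains $y_n$ for some $n\in S\setminus E$. For each such $n$, only finitely many members of $\mathscr H$ contain $y_n$, by the definition of $E$. Therefore
\[
\mathscr H=\bigcup_{n\in S\setminus E}\{P\in\mathscr H:y_n\in P\}
\]
is a countable union of finite families, and hence countable.

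The main obstacle is the first part. The delicate points are producing distinct members $H_k$, so that the selection is a genuine choice function on a subfamily, and justifying closedness of the selected set without $T_1$. The latter comes from the definition of closed discrete: every subset of the selected set is closed. The rest is routine.
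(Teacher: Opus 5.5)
Your proof is correct and follows the paper's argument: show that the set of indices $n\in S$ for which $y_n$ lies in infinitely many members is in $\I$ by selecting pairwise distinct members, then capture every member by some $y_n$ with $n\in S\setminus E$ and count. The only difference is in the contradiction step. The paper adds $x$ to the selected set (choosing $x$ from $P_0$ and from all remaining members) and contradicts discreteness at $x$. You instead use closedness of $\{y_{e_k}:k\in\N\}$ to get an open neighborhood of $x$ that misses all of them. Both routes work without $T_1$.
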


\begin{proof}
For $n\in S$, put
\[
\mathscr H(n)=\{P\in\mathscr H:y_n\in P\},
\]
and let
\[
D=\{n\in S:\mathscr H(n)\text{ is infinite}\}.
\]
First, $D\in\I$. Suppose otherwise that $D\in\Iplus$. Since $\I$ is admissible, $D$ is infinite. Enumerate it without repetition as $D=\{n_k:k\in\N\}$. Recursively choose pairwise distinct
\[
P_k\in\mathscr H(n_k).
\]
This is possible because each $\mathscr H(n_k)$ is infinite and only finitely many members have been excluded at stage $k$.

Every member of $\mathscr H$ contains $x$, since the constant sequence at $x$ is $\I$-convergent. Select $x$ from $P_0$, select $y_{n_k}$ from $P_k$ for $k\ge1$, and select $x$ from every remaining member of $\mathscr H$. Point-discreteness then makes
\[
A=\{x\}\cup\{y_{n_k}:k\ge1\}
\]
a closed discrete set. Since $D\subseteq S$, none of the points $y_{n_k}$ equals $x$. Moreover, $D\setminus\{n_0\}\in\Iplus$ because $\{n_0\}\in\I$. Hence
\[
x\in\cl\{y_{n_k}:k\ge1\}.
\]
A neighborhood $U$ of $x$ disjoint from $\{y_{n_k}:k\ge1\}$ would imply
\[
D\setminus\{n_0\}\subseteq\{n:y_n\notin U\}\in\I,
\]
contrary to $D\setminus\{n_0\}\in\Iplus$. This contradicts the discreteness of $A$ at $x$. Thus $D\in\I$.

Now fix $P\in\mathscr H$. Since $P$ is an $\I$-sequential neighborhood of $x$,
\[
E_P=\{n:y_n\notin P\}\in\I.
\]
Put
\[
G_P=\{n\in S:y_n\in P\}=S\setminus E_P.
\]
The set $G_P$ belongs to $\Iplus$. Otherwise
\[
S=(S\cap E_P)\cup G_P\in\I,
\]
a contradiction. Since $D\in\I$, also $G_P\setminus D\in\Iplus$, so there is some $n\in S\setminus D$ with $y_n\in P$. Therefore
\[
P\in\mathscr H(n)
\]
for some $n\in S\setminus D$, and hence
\[
\mathscr H\subseteq\bigcup_{n\in S\setminus D}\mathscr H(n).
\]
Each family $\mathscr H(n)$ on the right is finite by the definition of $D$, and $S\setminus D$ is countable. Thus $\mathscr H$ is countable.
\end{proof}

\begin{theorem}\label{thm:problem52}
Let $\I$ be an admissible ideal.
\begin{enumerate}[label=\textup{(\arabic*)}]
\item Every space, not necessarily $T_1$, with a $\sigma$-point-discrete $\I$-$sn$-network is $\I$-$snf$-countable.
\item Every regular space with a $\sigma$-hereditarily closure-preserving $\I$-$sn$-network is $\I$-$sn$-metrizable.
\end{enumerate}
\end{theorem}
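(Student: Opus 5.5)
For part~(1), fix a space $X$ with an $\I$-$sn$-network $\mathscr P=\bigcup_{x\in X}\mathscr P_x=\bigcup_{m\in\N}\mathscr P^{(m)}$, where each $\mathscr P^{(m)}$ is point-discrete, and fix $x\in X$. The argument splits into two alternatives. In the first alternative there is a sequence $y_n\to_\I x$ with $\{n:y_n\ne x\}\in\Iplus$. For each $m$, the family $\mathscr P_x\cap\mathscr P^{(m)}$ is a subfamily of a point-discrete family, so it is point-discrete; a subfamily inherits the property because selections from it extend to selections from the whole family, and subsets of closed discrete sets are closed discrete. By the definition of an $\I$-$sn$-network, its members are $\I$-sequential neighborhoods of $x$. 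Lemma~\ref{lem:point-discrete-countability} therefore makes each $\mathscr P_x\cap\mathscr P^{(m)}$ countable. Hence $\mathscr P_x$ is countable, and it is a local network at $x$ consisting of $\I$-sequential neighborhoods. No $T_1$ assumption is used here, since the lemma does not need it.

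In the second alternative, every sequence $y_n\to_\I x$ has $\{n:y_n\ne x\}\in\I$. Then for every such sequence, $\{n:y_n\notin\{x\}\}=\{n:y_n\ne x\}\in\I$, so $\{x\}$ is an $\I$-sequential neighborhood of $x$. The one-element family $\{\{x\}\}$ is trivially a local network at $x$: every open $U\ni x$ contains $\{x\}$. So $x$ again has a countable local network of $\I$-sequential neighborhoods. Combining the two alternatives at every point gives $\I$-$snf$-countability.

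For part~(2), let $X$ be regular with a $\sigma$-HCP $\I$-$sn$-network $\mathscr P$. Spaces are $T_1$ by the standing convention, and in a $T_1$ space every HCP family is point-discrete, as observed in the introduction. So $\mathscr P$ is $\sigma$-point-discrete, and part~(1) gives $\I$-$snf$-countability. By Lemma~\ref{lem:Isn-implies-sn}, $\mathscr P$ is also a $\sigma$-HCP ordinary $sn$-network. Ge's characterization \cite[Lemma~2.2]{Ge2003} then yields that $X$ is $sn$-metrizable. Finally, \cite[Theorem~3.6]{ZLLL2024} states that $\I$-$sn$-metrizability is equivalent to $\I$-$snf$-countability together with $sn$-metrizability, and this concludes the proof.

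The main obstacle is essentially already handled by Lemma~\ref{lem:point-discrete-countability}. What remains to check is only bookkeeping. One must confirm that point-discreteness passes to subfamilies, and that the hypotheses of Ge's lemma (regular, $T_1$, a $\sigma$-HCP $sn$-network) match the present setting exactly.
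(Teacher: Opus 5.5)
Your proposal is correct and follows the paper's proof essentially step for step. The structure is the same: the two-case split at each point $x$ (Lemma~\ref{lem:point-discrete-countability} applied to each trace $\mathscr P_x\cap\mathscr P^{(m)}$, or else $\{x\}$ itself as an $\I$-sequential neighborhood), then for part~(2) the chain through Lemma~\ref{lem:Isn-implies-sn}, Ge's characterization \cite[Lemma~2.2]{Ge2003} and \cite[Theorem~3.6]{ZLLL2024}. Your explicit check that point-discreteness passes to subfamilies is sound: the paper leaves it implicit, and it works because every member of an $\I$-$sn$-network is nonempty, so selections always extend.
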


\begin{proof}
For \textup{(1)}, let
\[
\mathscr P=\bigcup_{m\in\N}\mathscr P_m
\]
be a $\sigma$-point-discrete $\I$-$sn$-network on $X$, where each $\mathscr P_m$ is point-discrete. Fix $x\in X$.

Suppose first that there is an $\I$-convergent sequence $y_n\to_\I x$ such that
\[
\{n:y_n\ne x\}\in\Iplus.
\]
For each $m$, the trace
\[
\mathscr P_m\cap\mathscr P_x
\]
is a point-discrete family of $\I$-sequential neighborhoods of $x$. Lemma~\ref{lem:point-discrete-countability} shows that every such trace is countable. Hence
\[
\mathscr P_x
=
\bigcup_{m\in\N}(\mathscr P_m\cap\mathscr P_x)
\]
is countable. Since $\mathscr P_x$ is already a downward directed local network consisting of $\I$-sequential neighborhoods, it is a countable local $\I$-$sn$-network at $x$.

Suppose instead that every sequence $y_n\to_\I x$ satisfies
\[
\{n:y_n\ne x\}\in\I.
\]
Then $\{x\}$ is an $\I$-sequential neighborhood of $x$. The singleton family $\{\{x\}\}$ is downward directed and is a local network at $x$, because $\{x\}$ is contained in every neighborhood of $x$. Thus $x$ again has a countable local $\I$-$sn$-network. Therefore $X$ is $\I$-$snf$-countable.

For \textup{(2)}, let $X$ be regular and let $\mathscr P$ be a $\sigma$-HCP $\I$-$sn$-network. By Lemma~\ref{lem:Isn-implies-sn}, the same family is a $\sigma$-HCP ordinary $sn$-network. Ge's characterization \cite[Lemma~2.2]{Ge2003} gives ordinary $sn$-metrizability. Since $X$ is $T_1$ and every HCP family in a $T_1$ space is point-discrete, part~\textup{(1)} gives $\I$-$snf$-countability. Zhou--Liu--Liu--Lin's characterization \cite[Theorem~3.6]{ZLLL2024} therefore implies that $X$ is $\I$-$sn$-metrizable.
\end{proof}

Theorem~\ref{thm:problem52}(2) answers Problem~5.2 of Zhou--Liu--Liu--Lin \cite[Problem~5.2]{ZLLL2024}. Part~(1) gives the stronger point-discrete countability statement. The remainder of the paper studies the stronger local HCP-finiteness property.

\subsection{Local HCP-finiteness can fail}\label{subsec:HCF-failure}

\begin{definition}\label{def:HCF}
An admissible ideal $\I$ has the \emph{local HCP-finiteness property}, denoted $\HCF(\I)$, if the following holds. Whenever $X$ is a $T_1$ space, $x\in X$, $\mathscr H$ is an HCP family of $\I$-sequential neighborhoods of $x$, and $y_n\to_\I x$ with
\[
S=\{n:y_n\ne x\}\in\Iplus,
\]
then $\mathscr H$ is finite.
\end{definition}

Definition~\ref{def:HCF} asks for finiteness on HCP families, whereas Lemma~\ref{lem:point-discrete-countability} gives universal countability already for the broader class of point-discrete families. Ideal isomorphisms preserve local HCP-finiteness by transporting indexed sequences along the carrier bijection, and preserve $K$-uniformity by conjugating Kat\v{e}tov maps with that bijection.

\begin{theorem}\label{thm:HCF-failure}
There is an admissible ideal $\I$ on $\N$ for which $\HCF(\I)$ fails. More precisely, the failure is witnessed by a zero-dimensional Hausdorff space $X$, a point $p\in X$, an $\I$-convergent sequence to $p$ with $\I$-positive support away from $p$, and a countably infinite HCP family of $\I$-sequential neighborhoods of $p$.
\end{theorem}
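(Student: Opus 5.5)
The plan is to realise the witness as a one-point space whose neighbourhood filter at $p$ is built from a countable Rudin--Keisler antichain $\{\mathcal U_k:k\in\N\}$ of free ultrafilters, obtained from Shelah--Rudin \cite{ShelahRudin1978}. The ideal $\I$ will be the dual of the same kind of filter, so that the identity sequence converges. I have not checked that the specific choices below work; the step I am least sure of is flagged as the main obstacle at the end.

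\emph{Setup.} Partition $\N=\bigsqcup_kN_k$ into infinite blocks and move each $\mathcal U_k$ onto $N_k$ by a bijection. RK-incomparability is invariant under this. Let $X=\N\cup\{p\}$. Points of $\N$ are isolated. The neighbourhoods of $p$ are the sets $\{p\}\cup A$ with $A\in\mathcal F$, where $\mathcal F=\{A\subseteq\N:A\cap N_k\in\mathcal U_k\text{ for all }k\}$. Put $\I=\{A\subseteq\N:\N\setminus A\in\mathcal F\}$.
\begin{itemize}
\item $X$ is zero-dimensional and Hausdorff, since every basic neighbourhood $\{p\}\cup A$ is clopen.
\item $\I$ is admissible, because each $\mathcal U_k$ is free.
\item The sequence $y_n=n$ is $\I$-convergent to $p$ by construction.
\item Its support away from $p$ is all of $\N$, which lies in $\Iplus$.
\end{itemize}

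\emph{Key step: classifying $\I$-convergent sequences.} I will show that any $f:\N\to X$ with $f\to_\I p$ is controlled block by block. Fix $k$ and take the pushforward $f_*(\mathcal U_k)$, an ultrafilter on $X$. Convergence says $f^{-1}[\{p\}\cup A]\in\mathcal F\subseteq\mathcal U_k$ for every $A\in\mathcal F$, so $f_*(\mathcal U_k)$ is an ultrafilter on $X$ containing the neighbourhood filter of $p$. Then either $f_*(\mathcal U_k)$ is principal at $p$, or it is concentrated on $\N$. In the second case, splitting $\N$ into the blocks and using the $\mathcal U_i$-coordinates, it must contain some $N_i$, and hence equal $\mathcal U_i$.
\begin{itemize}
\item If $i\ne k$, then $\mathcal U_i\le_{RK}\mathcal U_k$, contradicting the antichain.
\item If $i=k$, then $f$ is a Rudin--Keisler self-map of $\mathcal U_k$, hence the identity on some $\mathcal U_k$-large set.
\end{itemize}
Thus on a $\mathcal U_k$-large set $W_k\subseteq N_k$, $f$ is either constantly $p$ or equal to the identity.

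Consequently, a set $\{p\}\cup B$ is an $\I$-sequential neighbourhood of $p$ exactly when $B\cap N_k\in\mathcal U_k$ for every $k$. Indeed, in both alternatives $f^{-1}[\{p\}\cup B]\cap N_k\in\mathcal U_k$. The HCP family will then consist of countably many such sets $P_m=\{p\}\cup B_m$. They are chosen so that any selection $H_m\subseteq P_m$ whose union accumulates at $p$ already has some single $H_m$ accumulating at $p$. Accumulation at $p$ for $C\subseteq\N$ means $C\notin\I$, that is, $C\cap N_k\in\mathcal U_k$ for some $k$. So the HCP requirement reduces to a combinatorial condition on the sets $B_m$ relative to the $\mathcal U_k$. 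Closure at the isolated points is automatic, and $p$ lies in every $P_m$.

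\emph{Main obstacle.} The hard part is reconciling two demands on the sets $B_m$. Each $B_m$ must be $\mathcal U_k$-large in every block, so that $P_m$ is a sequential neighbourhood. At the same time the family must be HCP: every $C$ that is $\mathcal U_k$-large in some block should meet some single $B_m$ in a $\mathcal U_j$-large set for some $j$, after the subpieces are taken.

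With the plain product filter $\mathcal F$ above this may fail: ultrafilters in the closure of $\{\mathcal U_k\}$ can appear, and infinitely many distinct $B_m$ all large in every block could have a thin diagonal selection. If so, I would refine the topology at $p$ to a filter that is $\mathcal U_k$-large in only cofinitely many blocks and adjust $\I$ to match. I would then choose $B_m=\bigcup_{j\ge m}N_j\cup W_m$, with $W_m$ chosen inside the first $m$ blocks. The HCP check then uses that each point lies in only finitely many of the sets $B_m\setminus\bigcup_{j\ge m}N_j$, and the sequential-neighbourhood check reruns the RK classification block by block. The part I would verify first and most carefully is that this classification step still holds for the refined filter, since a cofinite-block filter admits more ultrafilters extending it.
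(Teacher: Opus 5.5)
There is a genuine gap, and no choice of the sets $B_m$ can repair either of your versions. In both versions $\I$ is the dual of the neighbourhood filter of $p$, and the identity sequence $\I$-converges. So every $\I$-sequential neighbourhood $P$ of $p$ satisfies $\N\setminus P\in\I$, which means $P$ is an actual neighbourhood of $p$. Your classification step is therefore unnecessary. Its claim that an ultrafilter extending $\mathcal F$ must contain some $N_i$ is also false: for a free ultrafilter $\mathcal V$, the family $\{A:\{k:A\cap N_k\in\mathcal U_k\}\in\mathcal V\}$ extends $\mathcal F$ and contains no block.

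The real obstruction is that in a countable $T_1$ space no infinite family of neighbourhoods of a non-isolated point $p$ is HCP. Given distinct neighbourhoods $P_0,P_1,\ldots$, put $V_m=P_0\cap\cdots\cap P_m$.
\begin{itemize}
\item If $\bigcap_mV_m$ is a neighbourhood of $p$, enumerate the infinite countable set $\bigcap_mV_m\setminus\{p\}$ as $\{w_m:m\in\N\}$ and take $H(P_m)=\{w_m\}$. Each singleton is closed and misses $p$, yet their union accumulates at $p$.
\item Otherwise take $H(P_m)=V_m\setminus V_{m+1}$. Each piece misses the neighbourhood $V_{m+1}$ of $p$. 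Their union $V_0\setminus\bigcap_mV_m$ accumulates at $p$, since otherwise $\bigcap_mV_m$ would be a neighbourhood.
\end{itemize}
So your ``main obstacle'' is an impossibility. Concretely, in the refined version your sets $P_m=\{p\}\cup\bigcup_{j\ge m}N_j\cup W_m$ are neighbourhoods of $p$. Taking $H(P_m)=N_m$ already breaks closure preservation: $N_m$ misses the neighbourhood $\{p\}\cup(\N\setminus N_m)$, yet $\bigcup_mN_m=\N$ accumulates at $p$. Your HCP check controls only the $W_m$ and overlooks the tails.

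The missing idea is to decouple the topology from the ideal. You already have both filters; the construction works when the product filter $\mathcal F$ defines the topology and the cofinite-block filter defines the ideal. The paper keeps $\mathcal F$ as the neighbourhood filter at $p$, so $p\in\cl C$ exactly when $C\cap N_k\in\mathcal U_k$ for some $k$. It takes $\I=\{A:\{k:A\cap N_k\in\mathcal U_k\}\text{ is finite}\}$. This ideal properly contains the dual of $\mathcal F$, so the identity still $\I$-converges. The tails $P_n=\{p\}\cup\bigcup_{m\ge n}N_m$, which are not neighbourhoods, then become $\I$-sequential neighbourhoods.

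HCP is now easy. A subpiece $H_n\subseteq P_n$ with $p\notin\cl H_n$ is $\mathcal U_m$-null in every block. Block $m$ meets only $H_0,\ldots,H_m$, so the union is null in every block and does not accumulate at $p$.

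The RK antichain is needed only to show $f^{-1}[N_r]\in\I$ for every $\I$-convergent $f$. Suppose $M=\{m:f^{-1}[N_r]\cap N_m\in\mathcal U_m\}$ were infinite. Let $\mathcal V_m$ be the image on $N_r$ of the trace of $\mathcal U_m$ on $f^{-1}[N_r]\cap N_m$. For $B\subseteq N_r$ with $B\notin\mathcal U_r$, the set $\{p\}\cup(\N\setminus B)$ is a neighbourhood of $p$. Hence $f^{-1}[B]\in\I$, so $B\notin\mathcal V_m$ for all but finitely many $m\in M$. Thus $\mathcal V_m\to\mathcal U_r$ in $\beta N_r$ along $M$. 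Convergent sequences in $\beta N_r$ are eventually constant, so $\mathcal U_r=\mathcal V_m\le_{\mathrm{RK}}\mathcal U_m$ for some $m\ne r$, a contradiction. Finally, $f^{-1}[X\setminus P_n]=\bigcup_{r<n}f^{-1}[N_r]\in\I$.
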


\begin{proof}
Use the convention that $\mathcal V\le_{\mathrm{RK}}\mathcal U$ when $\mathcal V$ is the image of $\mathcal U$ under a map. Shelah's theorem, presented by Shelah and Rudin \cite[pp.~199--200]{ShelahRudin1978}, gives $2^{2^{\aleph_0}}$ pairwise Rudin--Keisler incomparable ultrafilter types on $\N$. The principal type is Rudin--Keisler below every ultrafilter type, so none of these types is principal. Choose countably many representatives, all free, transport them to pairwise disjoint countably infinite sets $D_0,D_1,\ldots$, and denote the resulting ultrafilters by
\[
\mathcal U_0,\mathcal U_1,\ldots.
\]
Put
\[
\mathcal K_m=\mathcal P(D_m)\setminus\mathcal U_m
\]
for $m\in\N$, let $D=\bigsqcup_{m\in\N}D_m$, and define ideals on $D$ by
\[
\mathcal K=
\left\{A\subseteq D:A\cap D_m\in\mathcal K_m\text{ for every }m\in\N\right\}
\]
and
\[
\I=
\left\{A\subseteq D:
\{m:A\cap D_m\notin\mathcal K_m\}\in\Fin
\right\}.
\]
Both families are ideals, and every finite $A\subseteq D$ belongs to both. Since $D_m\notin\mathcal K_m$ for every $m$, one has $D\notin\mathcal K$, while $\{m:D_m\notin\mathcal K_m\}=\N\notin\Fin$, so $D\notin\I$. Hence both ideals are admissible, and $\mathcal K\subseteq\I$. The carrier $D$ may be transported to $\N$ by a bijection, as in the conventions of Section~\ref{sec:prelim}.

Let
\[
X=D\cup\{p\}.
\]
Declare every point of $D$ isolated, and take
\[
V_A=\{p\}\cup(D\setminus A),
\qquad A\in\mathcal K,
\]
as a neighborhood base at $p$. This is a neighborhood base because $V_A\cap V_B=V_{A\cup B}$ and $\mathcal K$ is an ideal. Since $X\setminus V_A=A\subseteq D$ is open, every $V_A$ is clopen. For $d\in D$, the singleton $\{d\}$ is open by isolation and closed because $X\setminus\{d\}=V_{\{d\}}$ is open. Hence $X$ has a clopen base and is zero-dimensional. The disjoint open neighborhoods $V_{\{d\}}$ and $\{d\}$ separate $p$ from $d$, while distinct points of $D$ are separated by singleton neighborhoods. Thus $X$ is Hausdorff. The $D$-indexed family $y_d=d$ is $\I$-convergent to $p$, because
\[
\{d:y_d\notin V_A\}=A\in\mathcal K\subseteq\I
\]
for every $A\in\mathcal K$. After reindexing $D$ by a bijection with $\N$, this is a sequence in the original definition of $\I$-convergence. The $D$-indexed notation is retained to make the row structure explicit. Its support away from $p$ is all of $D$, and
\[
D\notin\I
\]
because every row $D_m$ is $\mathcal K_m$-positive.

For $n\in\N$, put
\[
P_n=\{p\}\cup\bigcup_{m\ge n}D_m.
\]
Since $P_n\setminus P_{n+1}=D_n\ne\varnothing$ for every $n$, the family $\{P_n:n\in\N\}$ is countably infinite. This family is HCP\@. Fix an arbitrary subfamily. For each selected $P_n$, choose $H_n\subseteq P_n$, and put $H_n=\varnothing$ for the remaining indices. Since every point of $D$ is isolated, closure preservation can fail only at $p$. If $p\in\cl H_n$ for some $n$, then $p$ belongs to both $\cl(\bigcup_n H_n)$ and $\bigcup_n\cl H_n$. Otherwise $p\notin\cl H_n$ for every $n$. Then $H_n\subseteq D$. For each $n$, choose a basic neighborhood $V_{A_n}$ of $p$ disjoint from $H_n$. Then $H_n\subseteq A_n\in\mathcal K$, and hence $H_n\in\mathcal K$. For a fixed $m$, only the finitely many indices $n\le m$ can contribute points from $D_m$. Hence
\[
\left(\bigcup_n H_n\right)\cap D_m
\]
is a finite union of members of $\mathcal K_m$ and therefore belongs to $\mathcal K_m$. Thus $\bigcup_nH_n\in\mathcal K$, so $p\notin\cl\bigcup_nH_n$. Closure preservation at the isolated points of $D$ is automatic.

It remains to show that every $P_n$ is an $\I$-sequential neighborhood of $p$. Let $z_d\to_\I p$ be an arbitrary $D$-indexed family and write $f(d)=z_d$. Fix $r\in\N$ and put
\[
E=f^{-1}[D_r].
\]
The claim is that $E\in\I$. Suppose not. Then
\[
M=\{m:E\cap D_m\in\mathcal U_m\}
\]
is infinite. For $m\in M$, put $E_m=E\cap D_m$ and let
\[
f_m:E_m\longrightarrow D_r
\]
be the restriction of $f$. Since $E_m\in\mathcal U_m$, the trace
\[
\mathcal U_m\upharpoonright E_m
=
\{A\cap E_m:A\in\mathcal U_m\}
\]
is an ultrafilter on $E_m$. Let
\[
\mathcal V_m
=
\{B\subseteq D_r:f_m^{-1}[B]\in\mathcal U_m\upharpoonright E_m\}
\]
be its image under $f_m$.

Fix $B\in\mathcal K_r$, and let $A_B\subseteq D$ have $r$th section $B$ and all other sections empty. Then $A_B\in\mathcal K$. Since $X\setminus V_{A_B}=A_B$ and $z_d\to_\I p$,
\[
f^{-1}[A_B]\in\I.
\]
For $m\in M$,
\[
f_m^{-1}[B]=f^{-1}[A_B]\cap D_m.
\]
Consequently, for all but finitely many $m\in M$,
\[
f_m^{-1}[B]\in\mathcal K_m.
\]
For each such $m$, since $E_m\in\mathcal U_m$, membership in the trace $\mathcal U_m\upharpoonright E_m$ agrees with membership in $\mathcal U_m$ for subsets of $E_m$. Hence $B\notin\mathcal V_m$ for all but finitely many $m\in M$. If $C\in\mathcal U_r$, then $D_r\setminus C\in\mathcal K_r$. Applying the preceding conclusion to $B=D_r\setminus C$ and using that each $\mathcal V_m$ is an ultrafilter gives $C\in\mathcal V_m$ for all but finitely many $m\in M$. Enumerate $M=\{m_k:k\in\N\}$ without repetition and regard $\beta D_r$ as the space of ultrafilters on the discrete set $D_r$. For $C\subseteq D_r$, write $\widehat C=\{\mathcal W\in\beta D_r:C\in\mathcal W\}$. The clopen sets $\widehat C$ with $C\in\mathcal U_r$ form a neighborhood base at $\mathcal U_r$ in $\beta D_r$, so
\[
\mathcal V_{m_k}\longrightarrow\mathcal U_r.
\]
Every convergent sequence in $\beta D_r$ is eventually constant \cite[Corollary~3.6.15]{Engelking1989}. Hence $\mathcal V_{m_k}=\mathcal U_r$ for all sufficiently large $k$.

Choose a sufficiently large $k$ with $m_k\ne r$. By construction, $\mathcal V_{m_k}$ is a Rudin--Keisler image of $\mathcal U_{m_k}\upharpoonright E_{m_k}$. Since $E_{m_k}\in\mathcal U_{m_k}$, the trace $\mathcal U_{m_k}\upharpoonright E_{m_k}$ is Rudin--Keisler equivalent to $\mathcal U_{m_k}$. The inclusion $E_{m_k}\hookrightarrow D_{m_k}$ maps the trace ultrafilter to $\mathcal U_{m_k}$, while any retraction $D_{m_k}\to E_{m_k}$ that fixes $E_{m_k}$ maps $\mathcal U_{m_k}$ to the trace. It follows that
\[
\mathcal U_r\le_{\mathrm{RK}}\mathcal U_{m_k},
\]
contrary to the choice of the ultrafilters. Therefore $f^{-1}[D_r]\in\I$ for every $r\in\N$.

Finally,
\[
X\setminus P_n=\bigcup_{r<n}D_r.
\]
The inverse image of this finite union belongs to $\I$, so $P_n$ is an $\I$-sequential neighborhood of $p$. The family $\{P_n:n\in\N\}$, together with the $\I$-convergent family $y_d=d$ constructed above, witnesses the failure of $\HCF(\I)$.
\end{proof}

Because the witness family in Theorem~\ref{thm:HCF-failure} is HCP, hence point-discrete, that theorem shows that the countability conclusion of Lemma~\ref{lem:point-discrete-countability} cannot in general be strengthened to finiteness. Local HCP-finiteness requires additional structure on the ideal.

\subsection{\texorpdfstring{$P^{-}$}{P-} implies local HCP-finiteness}\label{subsec:pminus-HCF}

\begin{lemma}\label{lem:pminus-HCF}
If $\I$ has $\pminus$, then $\HCF(\I)$ holds.
\end{lemma}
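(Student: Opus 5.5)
We argue by contradiction. Assume $\I$ has $\pminus$, and fix a $T_1$ space $X$, a point $x\in X$, an HCP family $\mathscr H$ of $\I$-sequential neighborhoods of $x$, and $y_n\to_\I x$ with $S=\{n:y_n\ne x\}\in\Iplus$. Suppose $\mathscr H$ is infinite. Choose pairwise distinct members $P_0,P_1,\ldots$ of $\mathscr H$. For each $k$ the set $E_{P_k}=\{n:y_n\notin P_k\}$ lies in $\I$, so
\[
G_k=\N\setminus E_{P_k}\in\Istar .
\]
We then apply the finite-dual form of $\pminus$, Lemma~\ref{lem:pminus-finite-dual}(ii), to the positive set $S$ and the sequence $(G_k)$. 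This gives finite sets $F_k\subseteq S\cap G_k$ with
\[
A=\bigcup_kF_k\in\Iplus .
\]
By construction, $y_n\in P_k$ and $y_n\ne x$ for every $n\in F_k$.

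Next we use the HCP property. Define subpieces $H(P_k)=\{y_n:n\in F_k\}\subseteq P_k$, and let $H(P)=\varnothing$ for all other $P\in\mathscr H$. Each $H(P_k)$ is a finite set in a $T_1$ space, hence closed. None of these sets contains $x$, since $y_n\ne x$ for $n\in S$. So $x\notin\bigcup_k\cl H(P_k)$. By closure preservation,
\[
\cl\Bigl(\bigcup_kH(P_k)\Bigr)=\bigcup_k H(P_k)\not\ni x .
\]
Hence there is a neighborhood $U$ of $x$ disjoint from $\{y_n:n\in A\}$. This gives $A\subseteq\{n:y_n\notin U\}\in\I$, which contradicts $A\in\Iplus$. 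Therefore $\mathscr H$ is finite.

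The crux is recognising that $\pminus$ in its finite-dual form (Lemma~\ref{lem:pminus-finite-dual}) supplies exactly the diagonal finite selection from the sets $\{n\in S:y_n\in P_k\}$. Once that selection is made, the rest is routine. We use the $T_1$ assumption only to make finite subpieces closed; we do not even need the distinctness of the selected points. We should check that the $P_k$ are distinct, so that the subpieces are assigned to different members of the family. This is the reason for choosing distinct members at the start.
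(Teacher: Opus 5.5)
Your proposal is correct and follows the paper's proof essentially step for step. You use the same sets $G_k=\{n:y_n\in P_k\}\in\Istar$, the same application of Lemma~\ref{lem:pminus-finite-dual}(ii) to $S$, the same finite subpieces $H(P_k)$, and the same contradiction between $x\notin\bigcup_k\cl H(P_k)$ and $x\in\cl\{y_n:n\in A\}$ forced by $A\in\Iplus$. The only difference is cosmetic: you phrase the last step via a neighborhood $U$ disjoint from $\{y_n:n\in A\}$, which is exactly the verification the paper leaves implicit.
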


\begin{proof}
Suppose $\mathscr H$ is infinite and choose pairwise distinct $P_0,P_1,\ldots\in\mathscr H$. Let $y_n\to_\I x$ and $S=\{n:y_n\ne x\}\in\Iplus$. Since each $P_j$ is an $\I$-sequential neighborhood of $x$,
\[
G_j=\{n:y_n\in P_j\}\in\Istar.
\]
By Lemma~\ref{lem:pminus-finite-dual}, choose finite
\[
F_j\subseteq S\cap G_j
\]
such that $F=\bigcup_jF_j\in\Iplus$. Put
\[
H(P_j)=\{y_n:n\in F_j\}\subseteq P_j
\]
and set $H(P)=\varnothing$ for $P\in\mathscr H\setminus\{P_j:j\in\N\}$. Each $H(P_j)$ is finite and contained in $X\setminus\{x\}$, so $x\notin\cl H(P_j)$ because $X$ is $T_1$.

On the other hand, $F\in\Iplus$ and $y_n\to_\I x$ imply
\[
x\in\cl\{y_n:n\in F\}
=\cl\Bigl(\bigcup_jH(P_j)\Bigr).
\]
Hereditary closure preservation gives
\[
\cl\Bigl(\bigcup_jH(P_j)\Bigr)
=\bigcup_j\cl H(P_j),
\]
a contradiction.
\end{proof}

Thus $\pminus$ gives a basic sufficient condition for local HCP-finiteness. Sections~\ref{sec:fubini} and~\ref{sec:unions} show how the property persists beyond $\pminus$ through Fubini constructions and increasing unions.

\section{Fubini constructions}\label{sec:fubini}

\subsection{Fubini products}

Local HCP-finiteness is preserved by Fubini products with a $\pminus$ outer factor, although $\pminus$ need not be preserved.

\begin{lemma}\label{lem:product-descent}
Let $\mathcal K$ and $\mathcal J$ be admissible ideals. Every $(\mathcal K\otimes\mathcal J)$-sequential neighborhood of $x$ is a $\mathcal J$-sequential neighborhood of $x$.
\end{lemma}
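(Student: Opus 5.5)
The plan is to lift a $\mathcal J$-convergent sequence to a $(\mathcal K\otimes\mathcal J)$-convergent family by making it constant in the first coordinate. Let $\mathcal K$ live on $D$ and $\mathcal J$ on $E$, and let $P$ be a $(\mathcal K\otimes\mathcal J)$-sequential neighborhood of $x$. Take an arbitrary $E$-indexed family $(z_e)_{e\in E}$ with $z_e\to_{\mathcal J}x$. Define the $D\times E$-indexed family $w_{(d,e)}=z_e$, which ignores the first coordinate.

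The first step is to check that $w_{(d,e)}\to_{\mathcal K\otimes\mathcal J}x$. Fix a neighborhood $U$ of $x$ and put $B=\{e:z_e\notin U\}$, which lies in $\mathcal J$. Then
\[
A=\{(d,e):w_{(d,e)}\notin U\}=D\times B .
\]
Every section $A_d$ equals $B\in\mathcal J$, so $\{d:A_d\notin\mathcal J\}=\varnothing\in\mathcal K$. Hence $A\in\mathcal K\otimes\mathcal J$.

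The second step applies the hypothesis on $P$. Put $C=\{e:z_e\notin P\}$. The set
\[
\{(d,e):w_{(d,e)}\notin P\}=D\times C
\]
belongs to $\mathcal K\otimes\mathcal J$, so $\{d:C\notin\mathcal J\}\in\mathcal K$. If $C\notin\mathcal J$, this set would be all of $D$. That is impossible because $\mathcal K$ is proper, so $D\notin\mathcal K$. Therefore $C\in\mathcal J$, which shows that $P$ is a $\mathcal J$-sequential neighborhood of $x$.

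No step is a real obstacle. The only points needing care are the reindexing conventions and the properness of $\mathcal K$, which is part of admissibility. The argument does not even need $\mathcal J$ to be admissible beyond being an ideal.
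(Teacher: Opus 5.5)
Your proposal is correct and is essentially the paper's own proof: lift the $\mathcal J$-convergent family to the array $w_{(d,e)}=z_e$ that is constant in the first coordinate, observe that its bad sets are $D\times B$ with $B\in\mathcal J$, and use properness of $\mathcal K$ to conclude that $\{e:z_e\notin P\}\in\mathcal J$. Your side remark that only the ideal axioms of $\mathcal J$ (and properness of $\mathcal K$) are used is also accurate.
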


\begin{proof}
Let $P$ be a $(\mathcal K\otimes\mathcal J)$-sequential neighborhood of $x$, and let $z_j\to_{\mathcal J}x$. Define the array $w_{i,j}=z_j$. For each neighborhood $U$ of $x$ the set
\[
\{(i,j):w_{i,j}\notin U\}
=\operatorname{dom}(\mathcal K)\times\{j:z_j\notin U\}
\]
belongs to $\mathcal K\otimes\mathcal J$, so $w_{i,j}\to_{\mathcal K\otimes\mathcal J}x$. Hence
\[
\operatorname{dom}(\mathcal K)\times\{j:z_j\notin P\}
\in\mathcal K\otimes\mathcal J.
\]
Since $\mathcal K$ is proper, this is possible only if $\{j:z_j\notin P\}\in\mathcal J$.
\end{proof}

\begin{theorem}\label{thm:fubini-lifting}
Let $\mathcal K$ and $\mathcal J$ be admissible ideals. If $\mathcal K$ has $\pminus$ and $\HCF(\mathcal J)$ holds, then
\[
\HCF(\mathcal K\otimes\mathcal J)
\]
holds.
\end{theorem}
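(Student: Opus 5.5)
The plan is a dichotomy on the rows of the witnessing array. Let $X$ be $T_1$, $x\in X$, $\mathscr H$ an HCP family of $(\mathcal K\otimes\mathcal J)$-sequential neighborhoods of $x$, and $w_{i,j}\to_{\mathcal K\otimes\mathcal J}x$ with support $S=\{(i,j):w_{i,j}\ne x\}\in(\mathcal K\otimes\mathcal J)^+$. By the definition of the Fubini product,
\[
R=\{i:S_i\in\mathcal J^+\}\in\mathcal K^+ .
\]
Either some row is itself a good $\mathcal J$-witness, and $\HCF(\mathcal J)$ applies, or every positive row fails $\mathcal J$-convergence. In the second case, $\pminus$ of $\mathcal K$ runs the finite-selection argument of Lemma~\ref{lem:pminus-HCF} across rows.

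\emph{Case 1: some $i\in R$ has $w_{i,\cdot}\to_{\mathcal J}x$.} The row sequence $(w_{i,j})_j$ is $\mathcal J$-convergent to $x$, and its support $S_i$ is $\mathcal J$-positive. By Lemma~\ref{lem:product-descent}, every member of $\mathscr H$ is a $\mathcal J$-sequential neighborhood of $x$. Hence $\HCF(\mathcal J)$ gives that $\mathscr H$ is finite.

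\emph{Case 2: no row indexed by $R$ is $\mathcal J$-convergent to $x$.} For each $i\in R$, choose an open neighborhood $U_i$ of $x$ with
\[
B_i=\{j:w_{i,j}\notin U_i\}\in\mathcal J^+ .
\]
Then $B_i\subseteq S_i$ automatically. Suppose, for a contradiction, that $\mathscr H$ is infinite, and choose pairwise distinct $P_0,P_1,\ldots\in\mathscr H$. Put $G_k=\{(i,j):w_{i,j}\in P_k\}$, which lies in $(\mathcal K\otimes\mathcal J)^\star$. Unwinding the Fubini definition, the row set
\[
C_k=\{i:(G_k)_i\in\mathcal J^\star\}\in\mathcal K^\star .
\]
Apply Lemma~\ref{lem:pminus-finite-dual} to $\mathcal K$, with positive set $R$ and filter sets $C_0\cap\cdots\cap C_k$. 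This yields finite sets $F_k\subseteq R\cap C_0\cap\cdots\cap C_k$ with $F=\bigcup_kF_k\in\mathcal K^+$. Shrinking if necessary, we may assume the $F_k$ are pairwise disjoint. Define
\[
H(P_k)=\{w_{i,j}:i\in F_k,\ j\in B_i\cap(G_k)_i\}\subseteq P_k ,
\]
and set $H(P)=\varnothing$ for all other $P\in\mathscr H$.

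Each $H(P_k)$ lies in the closed set $\bigcup_{i\in F_k}(X\setminus U_i)$. This is a finite union of closed sets missing $x$, so $x\notin\cl H(P_k)$. On the other hand, consider the index set
\[
A=\{(i,j):i\in F_k\text{ for some }k,\ j\in B_i\cap(G_k)_i\}.
\]
For each $i\in F_k$, the section $A_i=B_i\cap(G_k)_i$ is a $\mathcal J$-positive set intersected with a $\mathcal J^\star$ set, hence $\mathcal J$-positive. Since $F\in\mathcal K^+$, it follows that $A\in(\mathcal K\otimes\mathcal J)^+$. Now $\mathcal K\otimes\mathcal J$-convergence forces $x\in\cl\{w_a:a\in A\}=\cl\bigcup_kH(P_k)$: a neighborhood of $x$ missing this set would put $A$ into the ideal. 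This contradicts hereditary closure preservation, so $\mathscr H$ is finite.

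The main obstacle is Case~2, specifically producing subpieces whose individual closures avoid $x$. The ideal $\mathcal J$ has no $\pminus$, so one cannot make the pieces finite within rows. The device above avoids this: each piece is kept inside finitely many rows, and the bad neighborhoods $U_i$ supply closed sets $X\setminus U_i$ missing $x$. The union of these finitely many closed sets still misses $x$, so no regularity is needed. The remaining points to check are routine: the Fubini bookkeeping showing $C_k\in\mathcal K^\star$ and $A\in(\mathcal K\otimes\mathcal J)^+$, and the fact that $x\notin\cl H(P_k)$ uses only that each $U_i$ is open.
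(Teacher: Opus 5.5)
Your proposal is correct and is essentially the paper's proof. It uses the same row dichotomy: a $\mathcal J$-convergent row with $\mathcal J$-positive support triggers $\HCF(\mathcal J)$ via Lemma~\ref{lem:product-descent}, and otherwise the bad neighborhoods $U_i$ give $\mathcal J$-positive $B_i$. It also uses the same finite row selection from Lemma~\ref{lem:pminus-finite-dual}, the same subpieces indexed by $B_i\cap(G_k)_i$ (the paper's $A_{n,i}=B_i\setminus(D_n)_i$), and the same closure contradiction. Your other changes are only cosmetic: intersecting the filter sets, disjointifying the $F_k$, splitting into cases before assuming $\mathscr H$ infinite, and using the closed sets $X\setminus U_i$ instead of the neighborhood $\bigcap_{i\in R_n}U_i$.
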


\begin{proof}
Let $X$ be $T_1$, let $x\in X$, let $\mathscr H$ be an HCP family of $(\mathcal K\otimes\mathcal J)$-sequential neighborhoods of $x$, and let
\[
y_{i,j}\to_{\mathcal K\otimes\mathcal J}x
\]
with
\[
S=\{(i,j):y_{i,j}\ne x\}\in(\mathcal K\otimes\mathcal J)^+.
\]
Write $S_i=\{j:y_{i,j}\ne x\}$ and
\[
C=\{i:S_i\in\mathcal J^+\}.
\]
Then $C\in\mathcal K^+$. Suppose that $\mathscr H$ is infinite and choose pairwise distinct $P_0,P_1,\ldots\in\mathscr H$.

By Lemma~\ref{lem:product-descent}, each $P_n$ is a $\mathcal J$-sequential neighborhood of $x$. If for some $i\in C$ the row $(y_{i,j})_j$ is $\mathcal J$-convergent to $x$, then $S_i\in\mathcal J^+$ and $\HCF(\mathcal J)$ forces the HCP family $\{P_n:n\in\N\}$ to be finite, a contradiction.

No row indexed by $C$ can therefore be $\mathcal J$-convergent to $x$. For each $i\in C$ choose a neighborhood $U_i$ of $x$ such that
\[
B_i=\{j:y_{i,j}\notin U_i\}\in\mathcal J^+.
\]
For each $n$, since $P_n$ is a $(\mathcal K\otimes\mathcal J)$-sequential neighborhood,
\[
D_n=\{(i,j):y_{i,j}\notin P_n\}\in\mathcal K\otimes\mathcal J.
\]
Thus
\[
E_n=\{i:(D_n)_i\notin\mathcal J\}\in\mathcal K,
\qquad
G_n=\operatorname{dom}(\mathcal K)\setminus E_n\in\mathcal K^\star.
\]
By Lemma~\ref{lem:pminus-finite-dual} applied to $C\in\mathcal K^+$ and $(G_n)$, there are finite
\[
R_n\subseteq C\cap G_n
\]
with $R=\bigcup_nR_n\in\mathcal K^+$. For $i\in R_n$ put
\[
A_{n,i}=B_i\setminus(D_n)_i.
\]
Since $(D_n)_i\in\mathcal J$ and $B_i\in\mathcal J^+$, $A_{n,i}\in\mathcal J^+$. Define
\[
H(P_n)=\{y_{i,j}:i\in R_n,\ j\in A_{n,i}\}\subseteq P_n
\]
and set $H(P)=\varnothing$ for the remaining members of $\mathscr H$. Because $R_n$ is finite,
\[
V_n=\bigcap_{i\in R_n}U_i
\]
is a neighborhood of $x$, with the empty intersection interpreted as $X$, and $H(P_n)\cap V_n=\varnothing$. Hence $x\notin\cl H(P_n)$ for every $n$.

Let
\[
F=\bigcup_n\bigcup_{i\in R_n}\bigl(\{i\}\times A_{n,i}\bigr).
\]
For every $i\in R$ the section $F_i$ contains some $\mathcal J$-positive $A_{n,i}$. Therefore
\[
\{i:F_i\notin\mathcal J\}\supseteq R\in\mathcal K^+,
\]
so $F\notin\mathcal K\otimes\mathcal J$. The convergence of $(y_{i,j})$ gives
\[
x\in\cl\{y_{i,j}:(i,j)\in F\}
=\cl\Bigl(\bigcup_nH(P_n)\Bigr),
\]
contradicting hereditary closure preservation.
\end{proof}

Every Fubini product of two admissible ideals fails $\pminus$. After reindexing the outer domain as $\N$, let
\[
C_i=\{i\}\times\operatorname{dom}(\mathcal J).
\]
Each $C_i$ belongs to $\mathcal K\otimes\mathcal J$, and every set meeting each $C_i$ finitely has all sections in $\mathcal J$. Lemma~\ref{lem:block-pminus-obstruction} applies. Equivalently, after reindexing the countable carriers, $\Fin\otimes\Fin\le_K\mathcal K\otimes\mathcal J$, and the Kat\v{e}tov boundary for $\pminus$ \cite[Theorem~3.8(4)]{HrusakEtAl2017} gives the same conclusion.

\begin{corollary}\label{cor:finite-products}
Let $r\ge1$ and let $\mathcal I_1,\ldots,\mathcal I_r$ be admissible ideals. If $\mathcal I_k$ has $\pminus$ for $1\le k<r$ and $\HCF(\mathcal I_r)$ holds, then
\[
\HCF(\mathcal I_1\otimes\cdots\otimes\mathcal I_r)
\]
holds. If $r\ge2$, the product ideal does not have $\pminus$.
\end{corollary}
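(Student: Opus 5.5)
The plan is induction on $r$, using Theorem~\ref{thm:fubini-lifting} for the inductive step and the remark after it (via Lemma~\ref{lem:block-pminus-obstruction}) for the failure of $\pminus$.

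\emph{Base case.} For $r=1$ the product is $\mathcal I_1=\mathcal I_r$ by the right-associated convention. The hypothesis $\HCF(\mathcal I_r)$ is then exactly the conclusion, and the $\pminus$ clause is vacuous.

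\emph{Inductive step.} For $r\ge2$, put
\[
\mathcal J=\mathcal I_2\otimes\cdots\otimes\mathcal I_r .
\]
This is an $(r-1)$-fold product. The factors $\mathcal I_k$ for $2\le k<r$ have $\pminus$ and $\HCF(\mathcal I_r)$ holds, so the induction hypothesis gives $\HCF(\mathcal J)$. Repeated use of the fact recalled in the preliminaries, that Fubini products of admissible ideals are admissible, shows that $\mathcal J$ is admissible. By definition,
\[
\mathcal I_1\otimes\cdots\otimes\mathcal I_r=\mathcal I_1\otimes\mathcal J .
\]
Since $r\ge2$, the outer factor $\mathcal I_1$ has $\pminus$. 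Theorem~\ref{thm:fubini-lifting} with $\mathcal K=\mathcal I_1$ then yields $\HCF(\mathcal I_1\otimes\mathcal J)$.

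\emph{Failure of $\pminus$.} For $r\ge2$ the product is the Fubini product $\mathcal I_1\otimes\mathcal J$ of two admissible ideals. Reindex the outer carrier as $\N$ and take the blocks $C_i=\{i\}\times\operatorname{dom}(\mathcal J)$. Each $C_i$ lies in $\mathcal I_1\otimes\mathcal J$, because only one section is nonempty and $\{i\}\in\mathcal I_1$. A set meeting every $C_i$ finitely has all sections finite, hence in $\mathcal J$, so it belongs to the product. Lemma~\ref{lem:block-pminus-obstruction} then shows that $\pminus$ fails.

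The only point needing care is bookkeeping. Ideals on product carriers are transported to $\N$ by bijections, and both $\HCF$ and $\pminus$ are invariant under such isomorphisms, as noted after Definition~\ref{def:HCF} and in the preliminaries. So applying Theorem~\ref{thm:fubini-lifting} to the iterated carrier is legitimate. I expect no genuine obstacle beyond this.
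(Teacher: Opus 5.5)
Your proof is correct and takes the paper's route: induction from the innermost factor outward using Theorem~\ref{thm:fubini-lifting}, with the failure of $\pminus$ obtained from the row blocks $C_i=\{i\}\times\operatorname{dom}(\mathcal J)$ and Lemma~\ref{lem:block-pminus-obstruction}, as in the Fubini-product observation just before the corollary. Your version also spells out two checks the paper leaves implicit: that the inner product $\mathcal J$ is admissible, and that the hypotheses transfer correctly to the $(r-1)$-fold product.
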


\begin{proof}
For $r=1$, the local HCP-finiteness assertion is the hypothesis. For $r>1$, apply Theorem~\ref{thm:fubini-lifting} inductively from the innermost factor outward. The preceding Fubini-product observation proves the last assertion.
\end{proof}

\begin{corollary}\label{cor:Fin-powers}
For every $r\ge1$, $\HCF(\Fin^{\otimes r})$ holds. For every $r\ge2$, $\Fin^{\otimes r}$ does not have $\pminus$.
\end{corollary}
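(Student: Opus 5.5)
The plan is to derive Corollary~\ref{cor:Fin-powers} directly from Corollary~\ref{cor:finite-products}, with every factor equal to $\Fin$. Two facts about $\Fin$ itself are needed: it has $\pminus$, and $\HCF(\Fin)$ holds.

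For the first fact, let $S\in\Fin^+$ be partitioned as $S=\bigsqcup_nE_n$ with each $E_n$ finite. Then $S$ is infinite. I will take $A=S$. This set is infinite, so $A\in\Fin^+$, and $A\cap E_n=E_n$ is finite for every $n$. So Definition~\ref{def:pminus} holds trivially for $\Fin$. An alternative route is that $\Fin$ has $(AP)$, witnessed by $E=\varnothing$, and $(AP)$ implies $\pminus$ as shown in Section~\ref{sec:prelim}. The second fact then follows from Lemma~\ref{lem:pminus-HCF}.

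With these in hand, I apply Corollary~\ref{cor:finite-products} with $\mathcal I_1=\cdots=\mathcal I_r=\Fin$. All carriers are countable, and the products are transported to $\N$ by fixed bijections, as in the conventions of Section~\ref{sec:prelim}. Both $\HCF$ and $\pminus$ are invariant under such reindexing, as noted after Definition~\ref{def:HCF}. The hypotheses of the corollary hold: $\mathcal I_k=\Fin$ has $\pminus$ for $k<r$, and $\HCF(\mathcal I_r)$ holds. Hence $\HCF(\Fin^{\otimes r})$ holds for every $r\ge1$, and the case $r=1$ is exactly $\HCF(\Fin)$.

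For the failure of $\pminus$ when $r\ge2$, I write $\Fin^{\otimes r}=\Fin\otimes\Fin^{\otimes(r-1)}$, which is a Fubini product of two admissible ideals. Admissibility of iterated products follows from the stated closure of admissibility under $\otimes$. The observation preceding Corollary~\ref{cor:finite-products}, namely the column blocks $C_i$ combined with Lemma~\ref{lem:block-pminus-obstruction}, then shows that the product does not have $\pminus$. This is also the last clause of Corollary~\ref{cor:finite-products}.

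There is no real obstacle here. The only point to check carefully is that the product carriers, such as $\N\times(\N\times\cdots)$, are reindexed consistently, so that the inductive application of Theorem~\ref{thm:fubini-lifting} inside Corollary~\ref{cor:finite-products} applies verbatim.
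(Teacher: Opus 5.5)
Your proposal is correct and follows essentially the same route as the paper. The paper shows that $\Fin$ has $\pminus$ via the $(AP)$ witness $E=\varnothing$; your direct choice $A=S$ works equally well. Both arguments then obtain $\HCF(\Fin)$ from Lemma~\ref{lem:pminus-HCF} and apply Corollary~\ref{cor:finite-products} with every factor equal to $\Fin$, which gives both the local HCP-finiteness and the failure of $\pminus$ for $r\ge2$.
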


\begin{proof}
For every sequence $(E_k)$ of finite sets, $E=\varnothing$ satisfies $E_k\almost E$ for all $k$. Thus $\Fin$ has $(AP)$ and hence $\pminus$. Lemma~\ref{lem:pminus-HCF} gives $\HCF(\Fin)$, so Corollary~\ref{cor:finite-products} applies.
\end{proof}

Thus local HCP-finiteness is not confined to $\pminus$ ideals.

\subsection{Heterogeneous Fubini sums}\label{subsec:heterogeneous}

The product theorem treats a fixed inner ideal. At limit stages such as $\Fin^\omega$, the inner ideal varies from row to row. The Fubini-sum notation follows Kwela \cite[Subsection~2.2]{Kwela2021}. Let $\mathcal K$ be an admissible ideal on a countably infinite set $I$, and let $\mathcal J_i$ be an admissible ideal on a countably infinite set $D_i$ for each $i\in I$. Regard $\bigsqcup_{i\in I}D_i$ as the tagged union $\bigcup_{i\in I}(\{i\}\times D_i)$ and, for $A\subseteq\bigsqcup_{i\in I}D_i$, put $A_i=\{d\in D_i:(i,d)\in A\}$. Define
\[
\mathcal S=\mathcal K\text{-}\sum_{i\in I}\mathcal J_i
\]
by
\[
A\in\mathcal S
\quad\Longleftrightarrow\quad
\{i\in I:A_i\notin\mathcal J_i\}\in\mathcal K.
\]
Under these assumptions, $\mathcal S$ is admissible. Heredity and finite-union closure follow from the ideal axioms. Every finite set has only finite sections, so all of its sections lie in the corresponding row ideals. The whole carrier is excluded because its bad-row set is $I\notin\mathcal K$. For $i\in I$ and $E\in\mathcal J_i^+$, set
\[
R(i,E)=\{j\in I:\mathcal J_i\!\upharpoonright E\le_K\mathcal J_j\}.
\]
The global cross-row absorption condition is
\begin{equation}\label{eq:absorption}
R(i,E)\in\mathcal K^+
\qquad
(i\in I,\ E\in\mathcal J_i^+).
\end{equation}
For comparison with \eqref{eq:absorption}, fix ideals $\mathcal I$ and $\mathcal J$. Kwela--Lesner--Tryba \cite[Theorem~6.1]{KwelaLesnerTryba2026} prove that $\mathcal I\!\upharpoonright A\le_K\mathcal J$ for every $A\in\mathcal I^+$ if and only if every $\mathcal J$-continuous function preserves $\mathcal I$-convergence.

In the constant-row case $D_i=D$ and $\mathcal J_i=\mathcal J$ for all $i$, condition~\eqref{eq:absorption} reduces to $K$-uniformity of $\mathcal J$. Theorem~\ref{thm:fubini-lifting} does not require this additional uniformity assumption. Theorem~\ref{thm:heterogeneous-fubini} requires this absorption condition only for rows in some $W\in\mathcal K^\star$.

\begin{lemma}\label{lem:heterogeneous-descent}
Let $W\subseteq I$ and suppose $R(i,E)\in\mathcal K^+$ for every $i\in W$ and every $E\in\mathcal J_i^+$. Then every $\mathcal S$-sequential neighborhood of $x$ is a $\mathcal J_i$-sequential neighborhood of $x$ for each $i\in W$.
\end{lemma}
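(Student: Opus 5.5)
We argue by contraposition, following the template of Lemma~\ref{lem:product-descent}. The difference is that the same $\mathcal J_i$-convergent sequence can no longer simply be copied into every row. Instead, each row $j$ receives a Kat\v{e}tov pullback of a positive piece of that sequence.

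Fix $i\in W$ and an $\mathcal S$-sequential neighborhood $P$ of $x$. Suppose $P$ is not a $\mathcal J_i$-sequential neighborhood. Then there is a family $z_d\to_{\mathcal J_i}x$ ($d\in D_i$) with
\[
E=\{d\in D_i:z_d\notin P\}\in\mathcal J_i^+.
\]
By hypothesis $R=R(i,E)\in\mathcal K^+$. For each $j\in R$ choose a Kat\v{e}tov witness $g_j:D_j\to E$ with $g_j^{-1}[C]\in\mathcal J_j$ for every $C\in\mathcal J_i\!\upharpoonright E$. Define the $\mathcal S$-indexed family
\[
w_{(j,e)}=
\begin{cases}
z_{g_j(e)}, & j\in R,\\
x, & j\notin R.
\end{cases}
\]

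The first step is to check that $w\to_{\mathcal S}x$. Fix a neighborhood $U$ of $x$ and put $C_U=\{d\in E:z_d\notin U\}$. Since $z\to_{\mathcal J_i}x$, we have $C_U\in\mathcal J_i$, so $C_U\in\mathcal J_i\!\upharpoonright E$. For $j\in R$ the section of $\{w\notin U\}$ at row $j$ is $g_j^{-1}[C_U]\in\mathcal J_j$. For $j\notin R$ that section is empty. Every section therefore lies in its row ideal, the bad-row set is empty, and hence it belongs to $\mathcal K$.

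Because $P$ is an $\mathcal S$-sequential neighborhood, the set $\{(j,e):w_{(j,e)}\notin P\}$ belongs to $\mathcal S$. For $j\in R$, however, every $e\in D_j$ satisfies $g_j(e)\in E$, so $w_{(j,e)}\notin P$. The row-$j$ section is then all of $D_j$, which is not in $\mathcal J_j$ since $\mathcal J_j$ is proper. The bad-row set therefore contains $R\in\mathcal K^+$, a contradiction. This proves the lemma.

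The one delicate point is that Kat\v{e}tov maps must land in $E$, not in all of $D_i$. This is exactly what makes every pulled-back point lie outside $P$. It is also why the absorption condition is phrased with $\mathcal J_i\!\upharpoonright E$ and not $\mathcal J_i$. We must further confirm that convergence is tested only through the sets $C_U\subseteq E$, which are in the restricted ideal. Note also that $x\in P$ (take $z$ constant) is not needed here.
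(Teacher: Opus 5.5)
Your proof is correct and follows the paper's argument essentially step for step. You use the same Kat\v{e}tov pullback array ($w_{(j,e)}=z_{g_j(e)}$ on rows $j\in R(i,E)$, constant $x$ elsewhere), the same check that each bad section equals $g_j^{-1}[B_U\cap E]\in\mathcal J_j$, and the same contradiction from the $\mathcal K$-positive set of rows lying entirely outside $P$.
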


\begin{proof}
Fix $i\in W$, and suppose that an $\mathcal S$-sequential neighborhood $P$ is not a $\mathcal J_i$-sequential neighborhood of $x$. Choose $z_d\to_{\mathcal J_i}x$ such that
\[
E=\{d\in D_i:z_d\notin P\}\in\mathcal J_i^+.
\]
Then $R=R(i,E)\in\mathcal K^+$. For each $j\in R$, choose
\[
f_j:D_j\longrightarrow E
\]
witnessing $\mathcal J_i\!\upharpoonright E\le_K\mathcal J_j$, and define
\[
w_{j,e}=
\begin{cases}
z_{f_j(e)},&j\in R,\\
x,&j\notin R.
\end{cases}
\]
If $U$ is a neighborhood of $x$, then $B_U=\{d:z_d\notin U\}\in\mathcal J_i$, and for $j\in R$,
\[
\{e:w_{j,e}\notin U\}=f_j^{-1}[B_U\cap E]\in\mathcal J_j.
\]
For $j\notin R$ and $e\in D_j$, $w_{j,e}=x\in U$, so the corresponding bad section is empty. Hence $w\to_{\mathcal S}x$. On the other hand, for every $j\in R$ the whole $j$th row lies outside $P$. Thus the set of rows on which $w$ is outside $P$ is $\mathcal K$-positive, contradicting the $\mathcal S$-sequential-neighborhood property of $P$.
\end{proof}

\begin{theorem}\label{thm:heterogeneous-fubini}
Let
\[
\mathcal S=\mathcal K\text{-}\sum_{i\in I}\mathcal J_i.
\]
If $\mathcal K$ has $\pminus$ and, for some $W\in\mathcal K^\star$, $\HCF(\mathcal J_i)$ holds for every $i\in W$ and $R(i,E)\in\mathcal K^+$ for every $i\in W$ and every $E\in\mathcal J_i^+$, then $\HCF(\mathcal S)$ holds.
\end{theorem}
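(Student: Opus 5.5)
The plan is to copy the proof of Theorem~\ref{thm:fubini-lifting}, replacing the fixed inner ideal $\mathcal J$ by the row ideals $\mathcal J_i$. Lemma~\ref{lem:heterogeneous-descent} replaces Lemma~\ref{lem:product-descent}, and the rows are restricted to $W$ throughout.

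Fix a $T_1$ space $X$, a point $x$, an HCP family $\mathscr H$ of $\mathcal S$-sequential neighborhoods of $x$, and an $\mathcal S$-convergent family $y_{i,d}\to_{\mathcal S}x$ with $S=\{(i,d):y_{i,d}\ne x\}\in\mathcal S^+$. Put $S_i=\{d\in D_i:y_{i,d}\ne x\}$ and $C=\{i\in W:S_i\in\mathcal J_i^+\}$. Since $\{i:S_i\in\mathcal J_i^+\}\in\mathcal K^+$ and $I\setminus W\in\mathcal K$, we get $C\in\mathcal K^+$. Suppose $\mathscr H$ is infinite and choose pairwise distinct $P_0,P_1,\ldots\in\mathscr H$.

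By Lemma~\ref{lem:heterogeneous-descent}, applied with the given $W$, each $P_n$ is a $\mathcal J_i$-sequential neighborhood of $x$ for every $i\in W$. Suppose some row $i\in C$ is $\mathcal J_i$-convergent to $x$. That row has $\mathcal J_i$-positive support $S_i$, and $\{P_n\}$ is HCP, so $\HCF(\mathcal J_i)$ makes $\{P_n\}$ finite, a contradiction. Hence for every $i\in C$ there is a neighborhood $U_i$ of $x$ with $B_i=\{d:y_{i,d}\notin U_i\}\in\mathcal J_i^+$.

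For each $n$, set $D_n'=\{(i,d):y_{i,d}\notin P_n\}\in\mathcal S$, $E_n=\{i:(D_n')_i\notin\mathcal J_i\}\in\mathcal K$, and $G_n=I\setminus E_n\in\mathcal K^\star$. Lemma~\ref{lem:pminus-finite-dual} for $\mathcal K$, applied to $C$ and $(G_n)$, yields finite $R_n\subseteq C\cap G_n$ with $R=\bigcup_nR_n\in\mathcal K^+$. For $i\in R_n$, let $A_{n,i}=B_i\setminus(D_n')_i$; this set is $\mathcal J_i$-positive. Define
\[
H(P_n)=\{y_{i,d}:i\in R_n,\ d\in A_{n,i}\}\subseteq P_n,
\]
and put $H(P)=\varnothing$ for all other $P\in\mathscr H$. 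Since $R_n$ is finite, $V_n=\bigcap_{i\in R_n}U_i$ is a neighborhood of $x$ missing $H(P_n)$, so $x\notin\cl H(P_n)$.

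Now let $F=\bigcup_n\bigcup_{i\in R_n}\{i\}\times A_{n,i}$. For every $i\in R$, the section $F_i$ is $\mathcal J_i$-positive, so $F\in\mathcal S^+$. Because $y\to_{\mathcal S}x$, every neighborhood of $x$ meets $\{y_{i,d}:(i,d)\in F\}$. Thus $x\in\cl\bigl(\bigcup_nH(P_n)\bigr)$ while $x\notin\bigcup_n\cl H(P_n)$, which contradicts hereditary closure preservation.

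The main point to check is the use of Lemma~\ref{lem:heterogeneous-descent}. Its hypothesis holds exactly on $W$, so we restrict $C$ to $W$, and this costs nothing because $W\in\mathcal K^\star$. The only other point is that $C\in\mathcal K^+$ follows from $S\in\mathcal S^+$. The remaining steps are verbatim from Theorem~\ref{thm:fubini-lifting}.
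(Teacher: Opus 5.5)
Your proposal is correct and is essentially the paper's own proof. Both arguments restrict the positive rows to $W$, descend via Lemma~\ref{lem:heterogeneous-descent}, and rule out $\mathcal J_i$-convergent rows using $\HCF(\mathcal J_i)$. Both then use the finite-dual form of $\pminus$ to build the subpieces $H(P_n)$ from finitely many non-convergent rows, which contradicts hereditary closure preservation. The only difference is that you define $C$ inside $W$ from the start, whereas the paper passes to $C\cap W$ afterwards.
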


\begin{proof}
Let $X$ be $T_1$, let $x\in X$, let $\mathscr H$ be an HCP family of $\mathcal S$-sequential neighborhoods of $x$, and let
\[
y_{i,d}\to_{\mathcal S}x,
\qquad
S=\{(i,d):y_{i,d}\ne x\}\in\mathcal S^+.
\]
Put
\[
S_i=\{d:y_{i,d}\ne x\},
\qquad
C=\{i:S_i\in\mathcal J_i^+\}\in\mathcal K^+.
\]
Since $W\in\mathcal K^\star$, also $C_W=C\cap W\in\mathcal K^+$.
Suppose $\mathscr H$ is infinite and choose pairwise distinct $P_0,P_1,\ldots\in\mathscr H$. By Lemma~\ref{lem:heterogeneous-descent}, every $P_n$ is a $\mathcal J_i$-sequential neighborhood of $x$ for every $i\in W$.

If some row $(y_{i,d})_{d\in D_i}$ with $i\in C_W$ is $\mathcal J_i$-convergent to $x$, then $S_i\in\mathcal J_i^+$ and $\HCF(\mathcal J_i)$ forces $\mathscr H$ to be finite. Hence no row indexed by $C_W$ is $\mathcal J_i$-convergent. For each $i\in C_W$, choose a neighborhood $U_i$ of $x$ such that
\[
B_i=\{d:y_{i,d}\notin U_i\}\in\mathcal J_i^+.
\]
For each $n$, since $P_n$ is an $\mathcal S$-sequential neighborhood,
\[
\Delta_n=\{(i,d):y_{i,d}\notin P_n\}\in\mathcal S.
\]
Thus
\[
E_n=\{i:(\Delta_n)_i\notin\mathcal J_i\}\in\mathcal K,
\qquad
G_n=I\setminus E_n\in\mathcal K^\star.
\]
The finite-dual form of $\pminus$ gives finite
\[
R_n\subseteq C_W\cap G_n
\]
such that $R=\bigcup_nR_n\in\mathcal K^+$. For $i\in R_n$, put
\[
A_{n,i}=B_i\setminus(\Delta_n)_i\in\mathcal J_i^+
\]
and define
\[
H(P_n)=\{y_{i,d}:i\in R_n,\ d\in A_{n,i}\}\subseteq P_n,
\]
with $H(P)=\varnothing$ for the remaining members of $\mathscr H$. Since $R_n$ is finite,
\[
V_n=\bigcap_{i\in R_n}U_i
\]
is a neighborhood of $x$, with the empty intersection interpreted as $X$, and $H(P_n)\cap V_n=\varnothing$. Hence $x\notin\cl H(P_n)$.

Let
\[
F=\bigcup_n\bigcup_{i\in R_n}\bigl(\{i\}\times A_{n,i}\bigr).
\]
Every $i\in R$ has $F_i\notin\mathcal J_i$, so $F\notin\mathcal S$. The $\mathcal S$-convergence of $y$ gives
\[
x\in\cl\{y_{i,d}:(i,d)\in F\}
=\cl\Bigl(\bigcup_nH(P_n)\Bigr),
\]
contradicting hereditary closure preservation. Therefore $\mathscr H$ is finite.
\end{proof}

If, for some $W\in\mathcal K^\star$, every $\mathcal J_i$ with $i\in W$ is $K$-uniform and
\begin{equation}\label{eq:stage-absorption}
\{j\in I:\mathcal J_i\le_K\mathcal J_j\}\in\mathcal K^+
\qquad(i\in W),
\end{equation}
then the absorption hypothesis of Theorem~\ref{thm:heterogeneous-fubini} holds by transitivity of the Kat\v{e}tov order.

\subsection{The first limit stage and its finite successors}

At Kat\v{e}tov's first limit stage, the heterogeneous criterion applies to the row decomposition of $\Fin^\omega$. Kat\v{e}tov \cite[p.~240]{Katetov1972} gave the original filter recursion. Kwela \cite[Subsection~2.2]{Kwela2021} gives the corresponding ideal formulation and notation. The superscript notation distinguishes this hierarchy from the inductive-limit ideal $\Fin_\omega$ of Section~\ref{sec:unions}. The hierarchy satisfies
\[
\Fin^1=\Fin,
\qquad
\Fin^{\alpha+1}=\Fin\otimes\Fin^\alpha
\quad(0<\alpha<\omega_1),
\]
and at a limit ordinal $0<\lambda<\omega_1$,
\[
\Fin^\lambda
=\mathcal I_\lambda\text{-}\sum_{0<\alpha<\lambda}\Fin^\alpha,
\]
where $\mathcal I_\lambda$ is the ideal of bounded subsets of $\lambda\setminus\{0\}$. At the first limit stage,
\begin{equation}\label{eq:Fin-sup-omega}
\Fin^\omega
=\Fin\text{-}\sum_{1\le n<\omega}\Fin^n.
\end{equation}
For $1\le n<\omega$, $\Fin^n\cong\Fin^{\otimes n}$. Corollary~\ref{cor:Fin-powers} gives local HCP-finiteness. Kwela--Tryba \cite[Example~2.3 and Proposition~2.9]{KwelaTryba2017} proved that $\Fin$ is homogeneous and that Fubini products preserve homogeneity. Hence each finite power $\Fin^n$ is homogeneous and therefore $K$-uniform. If $1\le n\le m<\omega$, the projection onto the last $n$ coordinates witnesses $\Fin^n\le_K\Fin^m$ \cite[Subsection~2.4]{Kwela2021}. For each $1\le n<\omega$, the set $\{m:1\le m<\omega,\ \Fin^n\le_K\Fin^m\}$ contains the tail $\{m:n\le m<\omega\}$ and is $\Fin$-positive. Taking $W=\{n:1\le n<\omega\}\in\Fin^\star$, condition~\eqref{eq:stage-absorption} holds for the outer ideal $\Fin$. Since $\Fin$ has $\pminus$, Theorem~\ref{thm:heterogeneous-fubini} applies at the limit stage.

\begin{corollary}\label{cor:Fin-sup-omega-block}
For every $r<\omega$,
\[
\HCF(\Fin^{\omega+r})
\]
holds. Moreover, $\Fin^{\omega+r}$ does not have $\pminus$.
\end{corollary}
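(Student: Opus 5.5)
The argument has two layers: first the base case $r=0$ via Theorem~\ref{thm:heterogeneous-fubini}, then the finite successors via Theorem~\ref{thm:fubini-lifting}.

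\emph{Base case.} The discussion just before the corollary already checks every hypothesis of Theorem~\ref{thm:heterogeneous-fubini} for the decomposition \eqref{eq:Fin-sup-omega}:
\begin{itemize}
\item the outer ideal $\Fin$ has $\pminus$ (it has $(AP)$, as in Corollary~\ref{cor:Fin-powers});
\item each row $\Fin^n\cong\Fin^{\otimes n}$ satisfies $\HCF$ by Corollary~\ref{cor:Fin-powers};
\item each row is $K$-uniform by homogeneity (Kwela--Tryba);
\item condition \eqref{eq:stage-absorption} holds with $W=\{n:1\le n<\omega\}$, because of the tail Kat\v{e}tov reductions $\Fin^n\le_K\Fin^m$ for $m\ge n$.
\end{itemize}
By transitivity of $\le_K$, these give the absorption condition \eqref{eq:absorption} on $W$. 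One point needs care: $\HCF$ and $K$-uniformity must transfer across the isomorphism $\Fin^n\cong\Fin^{\otimes n}$. The paper notes after Definition~\ref{def:HCF} that ideal isomorphisms preserve both properties. Hence $\HCF(\Fin^\omega)$ holds.

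\emph{Successors.} Next I induct on $r$. Since $\Fin^{\omega+r+1}=\Fin\otimes\Fin^{\omega+r}$ and $\Fin$ has $\pminus$, Theorem~\ref{thm:fubini-lifting} passes $\HCF$ from $\Fin^{\omega+r}$ to $\Fin^{\omega+r+1}$. Equivalently, $\Fin^{\omega+r}\cong\Fin^{\otimes r}\otimes\Fin^\omega$ up to reindexing, and Corollary~\ref{cor:finite-products} applies directly.

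\emph{Failure of $\pminus$, case $r\ge1$.} Here $\Fin^{\omega+r}$ is a Fubini product $\Fin\otimes\Fin^{\omega+r-1}$ of two admissible ideals. By the observation following Theorem~\ref{thm:fubini-lifting}, every such product fails $\pminus$.

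\emph{Failure of $\pminus$, case $r=0$.} This is the one place where a new (small) argument is needed. I apply Lemma~\ref{lem:block-pminus-obstruction} to the row blocks
\[
C_n=\{n\}\times\operatorname{dom}(\Fin^n),\qquad 1\le n<\omega.
\]
Each $C_n$ lies in $\Fin^\omega$, since its only nonempty section is a single row and $\{n\}\in\Fin$. Now take any $B$ meeting every $C_n$ in a finite set. Each section $B_n$ is then finite, hence lies in $\Fin^n$ (finite sets belong to every admissible ideal). So the bad-row set of $B$ is empty, and $B\in\Fin^\omega$. Lemma~\ref{lem:block-pminus-obstruction} therefore shows that $\Fin^\omega$ fails $\pminus$.

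Beyond this last check, the only real obstacle is bookkeeping: making sure that the hypotheses of Theorem~\ref{thm:heterogeneous-fubini} are stated for the rows $\Fin^n$ themselves and not merely for their isomorphic copies $\Fin^{\otimes n}$. The isomorphism-invariance remark settles this.
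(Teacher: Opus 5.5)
Your proposal is correct and follows the paper's proof essentially step for step. The base case goes through Theorem~\ref{thm:heterogeneous-fubini} using the checks made before the corollary, the successors go through Theorem~\ref{thm:fubini-lifting}, $r=0$ uses the row blocks with Lemma~\ref{lem:block-pminus-obstruction}, and $r\ge1$ uses the Fubini-product observation. The one piece of bookkeeping the paper states and you leave implicit is that $\Fin^\omega$ and each $\Fin^{\omega+r}$ are admissible, which Theorem~\ref{thm:fubini-lifting} requires; this follows from the paper's admissibility remarks for heterogeneous sums and Fubini products.
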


\begin{proof}
Equation~\eqref{eq:Fin-sup-omega} and Theorem~\ref{thm:heterogeneous-fubini} give $\HCF(\Fin^\omega)$. The heterogeneous sum in~\eqref{eq:Fin-sup-omega} is admissible, and Fubini products preserve admissibility. For $s<\omega$, the successor recursion is
\[
\Fin^{\omega+(s+1)}=\Fin\otimes\Fin^{\omega+s}.
\]
Thus Theorem~\ref{thm:fubini-lifting} applies at each finite successor.

For the failure of $\pminus$ at $r=0$, reindex the outer rows of \eqref{eq:Fin-sup-omega} by $\N$ and let $C_m$ denote the $m$th tagged row. Each $C_m$ belongs to $\Fin^\omega$, and any set meeting each $C_m$ finitely has a finite section in every row and hence belongs to $\Fin^\omega$. Lemma~\ref{lem:block-pminus-obstruction} applies. For each $s<\omega$, the Fubini-product observation above applies to the displayed product $\Fin^{\omega+(s+1)}$.
\end{proof}

\section{Increasing unions and rank-\texorpdfstring{$\omega$}{omega} limit ideals}\label{sec:unions}

\subsection{The increasing-union theorem}

The second permanence route replaces Fubini rows by an increasing chain. Its key step is a cross-ideal version of $K$-uniform reindexing. Zhang--Zhang \cite[Theorem~2.5]{ZhangZhang2021} used the same-ideal form, and Zhou--Lin--Zhang \cite[Definition~5.3 and Lemma~5.5]{ZLZ2023} later applied the reindexing to $\I$-$sn$ structures. In Lemma~\ref{lem:Kuniform-descent}, the convergence ideal and the $K$-uniform stage ideal need not be the same.

\begin{lemma}\label{lem:Kuniform-descent}
Let $\mathcal J\subseteq\mathcal I$ be admissible ideals on the same countable set. If $\mathcal J$ is $K$-uniform, then every $\mathcal I$-sequential neighborhood of $x$ is a $\mathcal J$-sequential neighborhood of $x$.
\end{lemma}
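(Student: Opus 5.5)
The plan is to argue by contradiction along the pattern of Lemma~\ref{lem:heterogeneous-descent}: a $\mathcal J$-convergent sequence that leaves $P$ on a $\mathcal J$-positive set will be reindexed, via $K$-uniformity, into an $\mathcal I$-convergent sequence lying entirely outside $P$.

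Let $P$ be an $\mathcal I$-sequential neighborhood of $x$, and suppose it is not a $\mathcal J$-sequential neighborhood. Then there is a sequence $z_d\to_{\mathcal J}x$ such that
\[
E=\{d:z_d\notin P\}\in\mathcal J^+.
\]
By $K$-uniformity, $\mathcal J\!\upharpoonright E\le_K\mathcal J$, so there is a map $f:\operatorname{dom}(\mathcal J)\to E$ with $f^{-1}[C]\in\mathcal J$ for every $C\in\mathcal J\!\upharpoonright E$. Define the reindexed sequence $w_d=z_{f(d)}$.

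For convergence, fix a neighborhood $U$ of $x$. The set $B_U=\{d:z_d\notin U\}$ belongs to $\mathcal J$, so $B_U\cap E\in\mathcal J\!\upharpoonright E$. Since $f$ takes values in $E$,
\[
\{d:w_d\notin U\}=f^{-1}[B_U\cap E]\in\mathcal J\subseteq\mathcal I.
\]
Hence $w_d\to_{\mathcal I}x$. This is the only place where the inclusion $\mathcal J\subseteq\mathcal I$ is used.

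Every $w_d$ equals $z_e$ for some $e=f(d)\in E$, so no $w_d$ lies in $P$. Thus $\{d:w_d\notin P\}$ is the whole carrier, which is not in $\mathcal I$ because $\mathcal I$ is proper. This contradicts the assumption that $P$ is an $\mathcal I$-sequential neighborhood.

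The only point needing care is the direction of the Kat\v{e}tov map. The map goes from the carrier of $\mathcal J$ into $E$, and this is exactly the direction that lets every index of the new sequence land in $E$. Beyond that, the argument is routine.
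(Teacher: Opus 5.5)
Your proposal is correct and is essentially the paper's proof: the paper also uses the Kat\v{e}tov map $f:\operatorname{dom}(\mathcal J)\to D$ from $K$-uniformity to reindex the $\mathcal J$-convergent sequence as $w_n=z_{f(n)}$. It then gets $\mathcal I$-convergence from $f^{-1}[\{k:z_k\notin U\}\cap D]\in\mathcal J\subseteq\mathcal I$ and reaches a contradiction because every $w_n$ lies outside $P$.
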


\begin{proof}
Let $P$ be an $\mathcal I$-sequential neighborhood of $x$, and suppose $z_n\to_{\mathcal J}x$. If
\[
D=\{n:z_n\notin P\}\in\mathcal J^+,
\]
$K$-uniformity gives a map $f:\operatorname{dom}(\mathcal J)\to D$ such that
\[
B\in\mathcal J\!\upharpoonright D
\quad\Longrightarrow\quad
f^{-1}[B]\in\mathcal J.
\]
Define $w_n=z_{f(n)}$. For every neighborhood $U$ of $x$, the set
\[
\{k:z_k\notin U\}\cap D
\]
belongs to $\mathcal J\!\upharpoonright D$, and hence
\[
\{n:w_n\notin U\}
=f^{-1}[\{k:z_k\notin U\}\cap D]\in\mathcal J\subseteq\mathcal I.
\]
Thus $w_n\to_{\mathcal I}x$. But $w_n\notin P$ for every $n$, contradicting that $P$ is an $\mathcal I$-sequential neighborhood. Hence $D\in\mathcal J$.
\end{proof}

By Kwela--Tryba \cite[Example~2.3 and Proposition~2.9]{KwelaTryba2017}, every $\Fin^{\otimes r}$ is homogeneous and hence $K$-uniform for $r\ge1$. An increasing union of admissible ideals on the same carrier is admissible whenever the union is proper.

\begin{theorem}\label{thm:union-lifting}
Let $\mathcal J_0\subseteq\mathcal J_1\subseteq\cdots$ be admissible ideals on the same countable set, and suppose
\[
\mathcal I=\bigcup_{n\in\N}\mathcal J_n
\]
is proper. If for every $N$ there is $n\ge N$ such that $\HCF(\mathcal J_n)$ holds and $\mathcal J_n$ is $K$-uniform, then $\HCF(\mathcal I)$ holds.
\end{theorem}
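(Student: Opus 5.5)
The plan is a proof by contradiction that mixes the descent Lemma~\ref{lem:Kuniform-descent} with a diagonal selection modelled on the proof of Lemma~\ref{lem:pminus-HCF}. The diagonal runs along the chain instead of through $\pminus$.

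\textbf{Setup.} Fix a $T_1$ space $X$, a point $x\in X$, an HCP family $\mathscr H$ of $\mathcal I$-sequential neighborhoods of $x$, and $y_k\to_{\mathcal I}x$ with $S=\{k:y_k\ne x\}\in\mathcal I^+$. Assume $\mathscr H$ is infinite and choose pairwise distinct $P_0,P_1,\ldots\in\mathscr H$. For each neighborhood $U$ of $x$ put $B_U=\{k:y_k\notin U\}$, so $B_U\in\mathcal I$. Since $P_j$ is an $\mathcal I$-sequential neighborhood, $\{k:y_k\notin P_j\}\in\mathcal I$. Hence there is $k_j$ with $\{k:y_k\notin P_j\}\in\mathcal J_{k_j}$. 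By the cofinality hypothesis, choose indices $n_0<n_1<\cdots$ with $n_j\ge k_j$ such that each $\mathcal J_{n_j}$ is $K$-uniform and satisfies $\HCF(\mathcal J_{n_j})$. Put $T_j=\{k\in S:y_k\in P_j\}$. Then $S\setminus T_j\in\mathcal J_{n_j}$, while $S\notin\mathcal J_{n_j}$ because $S\notin\mathcal I$. So $T_j\in\mathcal J_{n_j}^+$.

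\textbf{Dichotomy at stage $j$.} Consider the restricted family $(y_k)_{k\in T_j}$. Using $K$-uniformity of $\mathcal J_{n_j}$, take $f_j:\operatorname{dom}\to T_j$ witnessing $\mathcal J_{n_j}\!\upharpoonright T_j\le_K\mathcal J_{n_j}$, and set $w_m=y_{f_j(m)}$. Every $w_m$ differs from $x$, so the support of $w$ away from $x$ is the whole carrier. There are two cases.
\begin{enumerate}[label=\textup{(\alph*)}]
\item If $w\to_{\mathcal J_{n_j}}x$, then Lemma~\ref{lem:Kuniform-descent} (with $\mathcal J_{n_j}\subseteq\mathcal I$) makes every member of $\mathscr H$ a $\mathcal J_{n_j}$-sequential neighborhood of $x$. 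Now $\HCF(\mathcal J_{n_j})$ forces $\mathscr H$ to be finite, which is a contradiction.
\item Otherwise there is a neighborhood $U_j$ of $x$ with $f_j^{-1}[B_{U_j}\cap T_j]\notin\mathcal J_{n_j}$. Since $f_j$ is a Katětov witness, this gives $A_j:=B_{U_j}\cap T_j\in\mathcal J_{n_j}^+$.
\end{enumerate}

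\textbf{Selection and contradiction.} So case (b) holds for every $j$. Define $H(P_j)=\{y_k:k\in A_j\}\subseteq P_j$, and set $H(P)=\varnothing$ for all other members of $\mathscr H$. Each $H(P_j)$ misses the neighborhood $U_j$, so $x\notin\cl H(P_j)$. Let $F=\bigcup_jA_j$. Then $F\supseteq A_j\notin\mathcal J_{n_j}$ for every $j$, and $n_j\to\infty$. Because the chain is increasing, $F$ lies in no $\mathcal J_n$, hence $F\in\mathcal I^+$. The $\mathcal I$-convergence of $y$ then gives
\[
x\in\cl\{y_k:k\in F\}=\cl\Bigl(\bigcup_jH(P_j)\Bigr).
\]
If some neighborhood $U$ of $x$ missed this set, we would have $F\subseteq B_U\in\mathcal I$. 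This contradicts hereditary closure preservation.

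\textbf{Main obstacle.} The delicate step is the dichotomy. One has to turn the failure of $\mathcal J_{n_j}$-convergence of the reindexed row into a single neighborhood $U_j$ whose bad set on $T_j$ is $\mathcal J_{n_j}$-positive. This uses $K$-uniformity a second time, in addition to its use inside Lemma~\ref{lem:Kuniform-descent}. One must also check that the stages $n_j$ can be chosen increasing and above $k_j$, so that $F$ escapes every stage of the chain.
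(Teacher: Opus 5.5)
Your proof is correct and is essentially the paper's argument: the paper likewise combines Lemma~\ref{lem:Kuniform-descent} with $\HCF$ at a single stage. Otherwise it selects subpieces $\{y_k:k\in A_n\}\subseteq P_n\setminus U_n$ whose index sets escape a cofinal sequence of stages, phrasing the dichotomy as whether the whole sequence $(y_k)$ is $\mathcal J_N$-convergent for some $N$. Your Kat\v{e}tov reindexing of the row over $T_j$ is harmless but unnecessary, because $\HCF(\mathcal J_{n_j})$ applies directly to $(y_k)$, whose support $S$ is $\mathcal J_{n_j}$-positive; and if $(y_k)$ is not $\mathcal J_{n_j}$-convergent, then some $B_{U_j}\in\mathcal J_{n_j}^+$ and already $B_{U_j}\cap T_j\in\mathcal J_{n_j}^+$ since $B_{U_j}\setminus T_j\subseteq S\setminus T_j\in\mathcal J_{n_j}$, so $K$-uniformity is needed only inside the descent lemma.
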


\begin{proof}
Passing to a cofinal subsequence of stages with both properties and reindexing it by $\N$ does not change the union. Thus every $\mathcal J_n$ may be assumed to have local HCP-finiteness and be $K$-uniform.

Let $X$ be $T_1$, let $x\in X$, let $\mathscr H$ be an HCP family of $\mathcal I$-sequential neighborhoods of $x$, and let
\[
y_k\to_{\mathcal I}x,
\qquad
S=\{k:y_k\ne x\}\in\mathcal I^+.
\]
Suppose $\mathscr H$ is infinite and choose pairwise distinct $P_0,P_1,\ldots\in\mathscr H$. Put
\[
D_n=\{k:y_k\notin P_n\}\in\mathcal I
\]
and choose $d_n$ with $D_n\in\mathcal J_{d_n}$.

The sequence must converge with respect to $\mathcal J_N$ for some $N$. Otherwise, for every $r$ there is a neighborhood $U$ of $x$ with $\{k:y_k\notin U\}\notin\mathcal J_r$. For each $n$ put $r_n=\max\{n,d_n\}$ and choose a neighborhood $U_n$ such that
\[
B_n=\{k:y_k\notin U_n\}\notin\mathcal J_{r_n}.
\]
Set $A_n=B_n\setminus D_n$. Since $D_n\in\mathcal J_{r_n}$, $A_n\notin\mathcal J_{r_n}$. Also
\[
\{y_k:k\in A_n\}\subseteq P_n\setminus U_n,
\]
so $x\notin\cl\{y_k:k\in A_n\}$.

Let $A=\bigcup_nA_n$. If $A\in\mathcal I$, choose $M$ with $A\in\mathcal J_M$ and then choose $n\ge M$. Since $r_n\ge n\ge M$,
\[
A_n\subseteq A\in\mathcal J_M\subseteq\mathcal J_{r_n},
\]
a contradiction. Hence $A\in\mathcal I^+$. The $\mathcal I$-convergence of $(y_k)$ gives
\[
x\in\cl\{y_k:k\in A\}
=\cl\Bigl(\bigcup_n\{y_k:k\in A_n\}\Bigr),
\]
contradicting hereditary closure preservation for the subpieces $\{y_k:k\in A_n\}$ of $P_n$. Therefore there is $N$ such that
\[
y_k\to_{\mathcal J_N}x.
\]

By Lemma~\ref{lem:Kuniform-descent}, every member of $\mathscr H$ is a $\mathcal J_N$-sequential neighborhood of $x$. Because $S\notin\mathcal I$, also $S\notin\mathcal J_N$. Thus $\HCF(\mathcal J_N)$ implies that $\mathscr H$ is finite, a contradiction.
\end{proof}

\subsection{The tree-derived ideal \texorpdfstring{$\mathcal H_{<\omega}$}{H<omega}}

The first rank-$\omega$ application of the increasing-union theorem is the tree-derived ideal $\mathcal H_{<\omega}$ of Pelayo G\'omez. Fix a tree partition $\langle A_s:s\in\N^{<\omega}\rangle$ of $\N$ as in \cite[Definition~2.1]{PelayoGomez2026}. For $X\subseteq\N$, put
\[
T_X^\infty=\{s\in\N^{<\omega}:|X\cap A_s|=\infty\}.
\]
For $T\subseteq\N^{<\omega}$ define
\[
D_{\Fin}(T)=\{s\in T:\{n:s\mathbin{{}^{\frown}}n\in T\}\text{ is infinite}\},
\]
and iterate this operator by $D_{\Fin}^0(T)=T$ and $D_{\Fin}^{n+1}(T)=D_{\Fin}(D_{\Fin}^n(T))$. Following \cite[Definition~3.1]{PelayoGomez2026}, set
\[
\mathcal H_n=\{X\subseteq\N:\varnothing\notin D_{\Fin}^n(T_X^\infty)\}
\qquad(n<\omega).
\]
The resulting ideals are independent, up to ideal isomorphism, of the chosen tree partition \cite[Proposition~2.4]{PelayoGomez2026}. Pelayo G\'omez \cite[Theorems~3.4 and~3.5]{PelayoGomez2026} proved
\[
\mathcal H_0\subsetneq\mathcal H_1\subsetneq\cdots,
\qquad
\mathcal H_n\cong\Fin^{\otimes(n+1)}
\quad(n<\omega),
\]
and
\[
\mathcal H_{<\omega}:=\bigcup_{n<\omega}\mathcal H_n.
\]
Pelayo G\'omez \cite[Proposition~3.8]{PelayoGomez2026} proved that $\mathcal H_{<\omega}$ is a proper tall $\boldsymbol\Sigma^0_\omega$ ideal of separation rank exactly $\omega$.

\begin{theorem}\label{thm:Hlimit}
The ideal $\mathcal H_{<\omega}$ has local HCP-finiteness and does not have $\pminus$.
\end{theorem}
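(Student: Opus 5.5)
The proof splits into two parts: local HCP-finiteness via Theorem~\ref{thm:union-lifting}, and failure of $\pminus$ via Lemma~\ref{lem:block-pminus-obstruction}.

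\textbf{Local HCP-finiteness.} The chain $\mathcal H_0\subsetneq\mathcal H_1\subsetneq\cdots$ consists of admissible ideals on the common carrier $\N$, and by \cite[Proposition~3.8]{PelayoGomez2026} its union $\mathcal H_{<\omega}$ is proper. By \cite[Theorems~3.4 and~3.5]{PelayoGomez2026}, each stage satisfies $\mathcal H_n\cong\Fin^{\otimes(n+1)}$. Corollary~\ref{cor:Fin-powers} gives $\HCF(\Fin^{\otimes(n+1)})$, and the Kwela--Tryba homogeneity result gives $K$-uniformity of $\Fin^{\otimes(n+1)}$. Both properties transfer along ideal isomorphisms, as remarked after Definition~\ref{def:HCF}. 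So every stage $\mathcal H_n$ is $K$-uniform and locally HCP-finite, and Theorem~\ref{thm:union-lifting} yields $\HCF(\mathcal H_{<\omega})$.

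\textbf{Failure of $\pminus$.} I would apply Lemma~\ref{lem:block-pminus-obstruction} with the level-one blocks of the tree partition. For each $m\in\N$ let $C_m$ be the union of all $A_s$ with $s$ extending the length-one node $\langle m\rangle$. The remaining piece $A_\varnothing$ is adjoined to $C_0$; this should be harmless, since $T^\infty_{A_\varnothing}\subseteq\{\varnothing\}$ forces $A_\varnothing\in\mathcal H_0$. Assuming the tree partition of \cite[Definition~2.1]{PelayoGomez2026} is a partition of $\N$ into the sets $A_s$, the sets $C_m$ then partition $\N$. Two hypotheses must be checked.

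First, $C_m\in\mathcal H_{<\omega}$. This step needs care, because $T^\infty_{C_m}$ is the full subtree above $\langle m\rangle$, and derivatives of a full subtree never remove nodes. The point is that the root $\varnothing$ does lie in $T^\infty_{C_m}$, since $C_m\cap A_\varnothing$ is finite. If $\varnothing\in T^\infty_X$ were defined through $X\cap A_\varnothing$ alone, $\varnothing$ would not even be in the tree. More importantly, the root has only one infinite-successor branch, namely $\langle m\rangle$. So after one derivative, $\varnothing\notin D_{\Fin}(T^\infty_{C_m})$, which gives $C_m\in\mathcal H_1$.

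Second, let $B$ meet every $C_m$ finitely. Then $T^\infty_B$ contains no node of length at least one, since each such node lies in some $C_m$. Hence $T^\infty_B\subseteq\{\varnothing\}$, so $\varnothing\notin D_{\Fin}(T^\infty_B)$ and $B\in\mathcal H_1\subseteq\mathcal H_{<\omega}$.

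Lemma~\ref{lem:block-pminus-obstruction} then shows that $\mathcal H_{<\omega}$ fails $\pminus$. As an alternative route, $\pminus$ is inherited downward in the Kat\v{e}tov order in the sense of \cite[Theorem~3.8]{HrusakEtAl2017}, and $\Fin\otimes\Fin\cong\mathcal H_1\subseteq\mathcal H_{<\omega}$, where the identity map gives $\Fin\otimes\Fin\le_K\mathcal H_{<\omega}$.

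\textbf{Main obstacle.} I expect the main difficulty to be matching Pelayo G\'omez's exact conventions for the tree partition, in particular how $T^\infty_X$ treats the root. The block decomposition may need adjusting accordingly. If it cannot be adjusted, the fallback is the Kat\v{e}tov argument through $\mathcal H_1$, which uses only the cited isomorphism $\mathcal H_1\cong\Fin^{\otimes2}$ and the paper's observation that Fubini products fail $\pminus$.
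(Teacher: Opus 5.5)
\textbf{Local HCP-finiteness.} This half is the paper's argument. The stages $\mathcal H_n\cong\Fin^{\otimes(n+1)}$ are admissible, locally HCP-finite by Corollary~\ref{cor:Fin-powers}, and $K$-uniform by Kwela--Tryba homogeneity, so Theorem~\ref{thm:union-lifting} applies.

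\textbf{Failure of $\pminus$: your fallback.} The paper takes your fallback route, in block form. It fixes an isomorphism $\varphi$ of $\Fin\otimes\Fin$ onto $\mathcal H_1$, puts $C_i=\varphi[\{i\}\times\N]$, and applies Lemma~\ref{lem:block-pminus-obstruction} using $\mathcal H_1\subseteq\mathcal H_{<\omega}$. No tree-partition convention enters.

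One caution about how you phrased it: $\pminus$ is \emph{not} inherited downward along $\le_K$ in general. For example, let $\mathcal A$ be the ideal on $\N\sqcup(\N\times\N)$ of sets with finite trace on $\N$ and trace in $\Fin\otimes\Fin$ on $\N\times\N$. Then $\mathcal A\le_K\Fin$ via the inclusion of the first summand, yet $\mathcal A$ fails $\pminus$ on the second summand. What you need is only the special case: if $f$ witnesses $\Fin\otimes\Fin\le_K\I$, then $\I$ fails $\pminus$. This is immediate, because the sets $f^{-1}[\{i\}\times\N]$ satisfy the hypotheses of Lemma~\ref{lem:block-pminus-obstruction}. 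For $f=\varphi^{-1}$ these are exactly the paper's $C_i$.

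\textbf{Failure of $\pminus$: your primary route.} As you suspected, this route does not work as written.
\begin{itemize}
\item With the paper's definitions, $\mathcal H_0=\{X:|X\cap A_\varnothing|<\infty\}$, and $\N\setminus A_\varnothing$ always lies in $\mathcal H_0$. Since $\mathcal H_0\cong\Fin$, the set $A_\varnothing$ must be cofinite. So the $A_s$ cannot be pairwise disjoint pieces of $\N$.
\item In the convention these formulas require, $A_\varnothing$ is essentially all of $\N$ and the $A_s$ refine one another; this is also what makes $T^\infty_X$ a tree. In particular $\bigcup_{s\supseteq\langle m\rangle}A_s=A_{\langle m\rangle}$.
\item Adjoining the ``remaining piece'' $A_\varnothing$ to $C_0$ therefore makes $C_0$ cofinite. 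Then $C_0\notin\mathcal H_{<\omega}$ and the blocks overlap, so the lemma's hypotheses fail.
\end{itemize}
Your verification also has two slips:
\begin{itemize}
\item $\varnothing\in T^\infty_{C_m}$ means exactly that $C_m\cap A_\varnothing$ is \emph{infinite}, not finite.
\item $T^\infty_X\subseteq\{\varnothing\}$ yields $X\in\mathcal H_1$, not $X\in\mathcal H_0$; membership in $\mathcal H_0$ requires $\varnothing\notin T^\infty_X$.
\end{itemize}

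\textbf{Repair.} Drop the adjoining step and take $C_m=A_{\langle m\rangle}$. Then your two checks go through:
\begin{itemize}
\item The root lies in $T^\infty_{C_m}$ with $\langle m\rangle$ as its only successor, so $C_m\in\mathcal H_1$.
\item A set $B$ meeting every $A_{\langle m\rangle}$ finitely has $T^\infty_B\subseteq\{\varnothing\}$, so $B\in\mathcal H_1$.
\end{itemize}
These blocks are just a concrete choice of the paper's $\varphi[\{i\}\times\N]$. The paper's abstract formulation buys independence from the exact form of \cite[Definition~2.1]{PelayoGomez2026}.
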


\begin{proof}
The isomorphisms $\mathcal H_n\cong\Fin^{\otimes(n+1)}$ and the admissibility of finite Fubini products show that each $\mathcal H_n$ is admissible. Corollary~\ref{cor:Fin-powers} and ideal-isomorphism invariance give local HCP-finiteness of each $\mathcal H_n$. By Kwela--Tryba \cite[Example~2.3 and Proposition~2.9]{KwelaTryba2017}, $\Fin^{\otimes(n+1)}$ is homogeneous and hence $K$-uniform. Since $K$-uniformity is invariant under ideal isomorphism, $\mathcal H_n$ is $K$-uniform as well. Theorem~\ref{thm:union-lifting} gives $\HCF(\mathcal H_{<\omega})$.

For the failure of $\pminus$, let $\varphi:\N\times\N\to\N$ be an isomorphism from $\Fin\otimes\Fin$ onto $\mathcal H_1$ and put
\[
C_i=\varphi[\{i\}\times\N].
\]
Each $C_i$ belongs to $\mathcal H_1$. If $B\subseteq\N$ meets each $C_i$ finitely, then $\varphi^{-1}[B]$ has finite vertical sections, so $B\in\mathcal H_1\subseteq\mathcal H_{<\omega}$. Lemma~\ref{lem:block-pminus-obstruction} shows that $\mathcal H_{<\omega}$ does not have $\pminus$.
\end{proof}

\subsection{Support lifts and the inductive-limit ideal \texorpdfstring{$\Fin_\omega$}{Fin omega}}

The second rank-$\omega$ application is $\Fin_\omega$. Its finite stages are isomorphic to support lifts of finite Fubini powers. For an admissible ideal $\mathcal I$ on a countably infinite set $D$, define its \emph{support lift} to be $\mathcal I\times 0$ in the standard Fubini-sum notation \cite[Section~2.1]{BarbarskiEtAl2013}. Here $0=\{\varnothing\}$, which is not admissible. On $D\times\N$, the support lift is
\[
\widehat{\mathcal I}
=
\{A\subseteq D\times\N:\operatorname{supp}(A)\in\mathcal I\},
\qquad
\operatorname{supp}(A)=\{d:A_d\ne\varnothing\}.
\]
\begin{proposition}\label{prop:support-lift}
Let $\mathcal I$ be an admissible ideal on a countably infinite set $D$. Then $\widehat{\mathcal I}$ is an admissible ideal. Moreover,
\begin{enumerate}[label=\textup{(\alph*)}]
\item If $\HCF(\mathcal I)$ holds, then $\HCF(\widehat{\mathcal I})$ holds.
\item If $\mathcal I$ is $K$-uniform, then $\widehat{\mathcal I}$ is $K$-uniform.
\end{enumerate}
\end{proposition}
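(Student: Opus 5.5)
The plan is to check admissibility directly and then prove (a) and (b) by moving between $D$-indexed and $(D\times\N)$-indexed families. A family on $D$ is lifted to $D\times\N$ by making it constant along each fibre. A family on $D\times\N$ is pushed down to $D$ by choosing one representative per fibre.

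For admissibility, note that $\operatorname{supp}(A\cup B)=\operatorname{supp}(A)\cup\operatorname{supp}(B)$ and that $\operatorname{supp}$ is monotone, so $\widehat{\mathcal I}$ is an ideal. A finite set has finite support and therefore lies in $\widehat{\mathcal I}$, because $\mathcal I$ is admissible. The whole carrier has support $D\notin\mathcal I$, so $\widehat{\mathcal I}$ is proper.

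For (a), I first show that every $\widehat{\mathcal I}$-sequential neighborhood $P$ of $x$ is an $\mathcal I$-sequential neighborhood of $x$; this mirrors Lemma~\ref{lem:product-descent}.
\begin{itemize}
\item Given $z_d\to_{\mathcal I}x$, put $w_{d,k}=z_d$. For every neighborhood $U$ of $x$ we have $\{(d,k):w_{d,k}\notin U\}=\{d:z_d\notin U\}\times\N$, whose support lies in $\mathcal I$. Hence $w\to_{\widehat{\mathcal I}}x$.
\item Therefore $\{d:z_d\notin P\}\times\N\in\widehat{\mathcal I}$, which says exactly that $\{d:z_d\notin P\}\in\mathcal I$.
\end{itemize}
Next, let $y_{d,k}\to_{\widehat{\mathcal I}}x$ with support $S$ away from $x$, where $S\in\widehat{\mathcal I}^+$, so that $\operatorname{supp}(S)\in\mathcal I^+$.
\begin{itemize}
\item For each $d\in\operatorname{supp}(S)$ pick $k_d$ with $y_{d,k_d}\ne x$, and set $z_d=y_{d,k_d}$. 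For $d\notin\operatorname{supp}(S)$ set $z_d=x$.
\item Then $\{d:z_d\notin U\}\subseteq\operatorname{supp}\{(d,k):y_{d,k}\notin U\}\in\mathcal I$, so $z\to_{\mathcal I}x$.
\item The support of $z$ away from $x$ is $\operatorname{supp}(S)\in\mathcal I^+$.
\end{itemize}
The given HCP family $\mathscr H$ consists of $\mathcal I$-sequential neighborhoods by the first step. So $\HCF(\mathcal I)$, applied to $z$, forces $\mathscr H$ to be finite.

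For (b), let $A\in\widehat{\mathcal I}^+$ and put $E=\operatorname{supp}(A)\in\mathcal I^+$. By $K$-uniformity of $\mathcal I$, take $f:D\to E$ witnessing $\mathcal I\!\upharpoonright E\le_K\mathcal I$. Choose a transversal $s:E\to A$ with $s(e)\in\{e\}\times A_e$, and define $g:D\times\N\to A$ by $g(d,n)=s(f(d))$. Let $B\subseteq A$ with $\operatorname{supp}(B)\in\mathcal I$. Then
\[
g^{-1}[B]\subseteq f^{-1}[\operatorname{supp}(B)\cap E]\times\N .
\]
Its support lies in $f^{-1}[\operatorname{supp}(B)\cap E]\in\mathcal I$, so $g^{-1}[B]\in\widehat{\mathcal I}$. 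This shows $\widehat{\mathcal I}\!\upharpoonright A\le_K\widehat{\mathcal I}$.

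I expect no step to be a serious obstacle. The point needing care is in (a): I must verify that picking one representative per fibre keeps both $\mathcal I$-convergence and $\mathcal I$-positivity of the support. This holds because $\operatorname{supp}$ turns $\widehat{\mathcal I}$-smallness into $\mathcal I$-smallness in both directions.
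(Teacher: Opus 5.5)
Your proposal is correct and follows the paper's proof essentially step for step. Admissibility comes from monotonicity and additivity of $\operatorname{supp}$; part (a) lifts $\mathcal I$-convergent families to constant-row arrays to show that $\widehat{\mathcal I}$-sequential neighborhoods are $\mathcal I$-sequential neighborhoods, then pushes down by choosing one fibre representative; and part (b) composes the Kat\v{e}tov witness $f$ with a choice of a point of $A$ in each fibre, where the only cosmetic difference is that you pick that point through a transversal $s$ of $f(d)$ while the paper fixes $r_d$ for each $d$.
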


\begin{proof}
The support map is monotone under inclusion and satisfies $\operatorname{supp}(A\cup B)=\operatorname{supp}(A)\cup\operatorname{supp}(B)$. Hence $\widehat{\mathcal I}$ is hereditary and closed under finite unions. Finite sets have finite support, while the whole carrier has support $D\notin\mathcal I$. Thus $\widehat{\mathcal I}$ is admissible.

For \textup{(a)}, let $X$ be a $T_1$ space, let $x\in X$, let $\mathscr H$ be an HCP family of $\widehat{\mathcal I}$-sequential neighborhoods of $x$, and let $y_{d,r}\to_{\widehat{\mathcal I}}x$ satisfy
\[
S=\{(d,r):y_{d,r}\ne x\}\in\widehat{\mathcal I}^{+}.
\]
Then $T=\operatorname{supp}(S)\in\mathcal I^+$. For each $d\in T$ choose $r_d$ with $y_{d,r_d}\ne x$ and put $z_d=y_{d,r_d}$, while $z_d=x$ for $d\notin T$. For every neighborhood $U$ of $x$,
\[
\{d:z_d\notin U\}
\subseteq
\operatorname{supp}\{(d,r):y_{d,r}\notin U\}\in\mathcal I,
\]
so $z_d\to_{\mathcal I}x$ and $\{d:z_d\ne x\}=T\in\mathcal I^+$.

If $P$ is a $\widehat{\mathcal I}$-sequential neighborhood of $x$ and $w_d\to_{\mathcal I}x$, the constant-row array $v_{d,r}=w_d$ is $\widehat{\mathcal I}$-convergent to $x$. Hence
\[
\{d:w_d\notin P\}
=
\operatorname{supp}\{(d,r):v_{d,r}\notin P\}
\in\mathcal I.
\]
Thus every member of $\mathscr H$ is an $\mathcal I$-sequential neighborhood. Applying $\HCF(\mathcal I)$ to $(z_d)$ shows that $\mathscr H$ is finite, proving \textup{(a)}.

For \textup{(b)}, let $A\in\widehat{\mathcal I}^{+}$ and put $T=\operatorname{supp}(A)\in\mathcal I^+$. Choose $g:\operatorname{dom}(\mathcal I)\to T$ witnessing $\mathcal I\!\upharpoonright T\le_K\mathcal I$, and, for each $d$, fix $r_d$ with $(g(d),r_d)\in A$. Define $F:D\times\N\to A$ by
\[
F(d,n)=(g(d),r_d).
\]
If $E\in\widehat{\mathcal I}\!\upharpoonright A$, then $\operatorname{supp}(E)\in\mathcal I$ and $\operatorname{supp}(E)\subseteq T$. Also,
\[
\operatorname{supp}(F^{-1}[E])
\subseteq
g^{-1}[\operatorname{supp}(E)]\in\mathcal I.
\]
Hence $F^{-1}[E]\in\widehat{\mathcal I}$, proving \textup{(b)}.
\end{proof}

For finite $n\ge1$, Kwela's subscript convention agrees with Kat\v{e}tov's superscript convention, $\Fin_n=\Fin^n$. At the limit stage $\omega$, $\Fin_\omega$ and $\Fin^\omega$ denote different constructions \cite[Subsection~2.4]{Kwela2021}. In Kwela's notation, put
\[
D_\omega=\bigsqcup_{m\ge1}\N^m,
\]
and, for $1\le k\le m$, let $\pi_{k,m}:\N^m\to\N^k$ be the projection onto the last $k$ coordinates. Using Kwela's full-domain convention \cite[Definition~2.1 and Subsection~2.4]{Kwela2021}, define
\[
\mathcal L_k=
\left\{
M\subseteq D_\omega:
\exists B\in\Fin^{\otimes k}\qquad
\forall m\ge k\quad
\pi_{k,m}[M\cap\N^m]\subseteq B
\right\}.
\]
Then $\Fin_\omega=\bigcup_{k\ge1}\mathcal L_k$. Kwela \cite[Subsection~2.1 and Corollary~3.5]{Kwela2021} showed that this limit is an admissible ideal.

\begin{lemma}\label{lem:Finomega-stages}
For every $k\ge1$,
\[
\mathcal L_k\subseteq\mathcal L_{k+1}
\qquad\text{and}\qquad
\mathcal L_k\cong\widehat{\Fin^{\otimes k}}.
\]
\end{lemma}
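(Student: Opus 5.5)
We prove the inclusion directly from the definition of $\mathcal L_k$. For the isomorphism we exploit the fibre structure of $D_\omega$ over $\N^k$. Throughout, the projections compose as $\pi_{k,m}=\pi_{k,k+1}\circ\pi_{k+1,m}$ for $m\ge k+1$, and $\Fin^{\otimes(k+1)}=\Fin\otimes\Fin^{\otimes k}$ with the outer coordinate being the first one.

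\emph{Inclusion.} Let $M\in\mathcal L_k$ be witnessed by $B\in\Fin^{\otimes k}$. Put
\[
B'=\{(a,b)\in\N\times\N^k:b\in B\}\subseteq\N^{k+1}.
\]
Every first-coordinate section of $B'$ equals $B\in\Fin^{\otimes k}$, so no section is positive and $B'\in\Fin\otimes\Fin^{\otimes k}=\Fin^{\otimes(k+1)}$. Now fix $m\ge k+1$ and $u\in M\cap\N^m$. The point $\pi_{k+1,m}(u)$ has last $k$ coordinates $\pi_{k,m}(u)\in B$, so it lies in $B'$. Hence $B'$ witnesses $M\in\mathcal L_{k+1}$.

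\emph{Isomorphism.} For $t\in\N^k$ put
\[
\Phi_t=\bigcup_{m\ge k}\pi_{k,m}^{-1}(t),
\]
and let $\Lambda=\bigsqcup_{1\le m<k}\N^m$. Then
\[
D_\omega=\Lambda\sqcup\bigsqcup_{t\in\N^k}\Phi_t,
\]
and each $\Phi_t$ is countably infinite. The definition of $\mathcal L_k$ places no constraint on $M\cap\Lambda$. For the remaining part, $\pi_{k,m}[M\cap\N^m]\subseteq B$ for all $m\ge k$ holds exactly when $\{t:M\cap\Phi_t\ne\varnothing\}\subseteq B$. Since $\Fin^{\otimes k}$ is hereditary, this yields the characterization
\[
M\in\mathcal L_k\iff\{t\in\N^k:M\cap\Phi_t\ne\varnothing\}\in\Fin^{\otimes k}.
\]
This is almost verbatim the support-lift condition. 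The only defect is the extra block $\Lambda$.

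\emph{Absorbing $\Lambda$.} This is the one bookkeeping obstacle. If $k=1$, then $\Lambda=\varnothing$, and choosing bijections $\Phi_t\to\{t\}\times\N$ gives the isomorphism with $\widehat{\Fin}$ at once. If $k\ge2$, let
\[
Z=\{0\}\times\N^{k-1}.
\]
Then $Z\in\Fin^{\otimes k}$, because only one outer section is nonempty. Choose a bijection $\theta$ that maps $\Lambda\cup\bigcup_{t\in Z}\Phi_t$ onto $Z\times\N$ (both sets are countably infinite), and maps each $\Phi_t$ with $t\notin Z$ onto $\{t\}\times\N$. Then for every $M\subseteq D_\omega$, the sets $\operatorname{supp}(\theta[M])$ and $\{t:M\cap\Phi_t\ne\varnothing\}$ agree outside $Z$. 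Since $Z\in\Fin^{\otimes k}$, one belongs to $\Fin^{\otimes k}$ iff the other does. By the characterization above, $M\in\mathcal L_k$ iff $\theta[M]\in\widehat{\Fin^{\otimes k}}$. Thus $\theta$ is the required isomorphism $\mathcal L_k\cong\widehat{\Fin^{\otimes k}}$.
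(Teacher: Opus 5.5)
Your proof is correct and follows essentially the same route as the paper: the same witness $\N\times B$ for the inclusion, the same fibre decomposition $\Phi_t$ (the paper's $F_s$) with the characterization $\{t:M\cap\Phi_t\ne\varnothing\}\in\Fin^{\otimes k}$, and a bijection that absorbs the unconstrained lower block. The only difference is cosmetic. The paper absorbs the lower block into a single fibre $\{s_0\}\times\N$, which needs only that singletons lie in $\Fin^{\otimes k}$ and avoids your separate case $k=1$, while you absorb it into $Z\times\N$ with $Z=\{0\}\times\N^{k-1}\in\Fin^{\otimes k}$; both choices work.
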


\begin{proof}
If $M\in\mathcal L_k$ is witnessed by $B\in\Fin^{\otimes k}$, then $\N\times B\in\Fin^{\otimes(k+1)}$ witnesses $M$ at stage $k+1$. Hence $\mathcal L_k\subseteq\mathcal L_{k+1}$.

For the isomorphism, for $s\in\N^k$ put
\[
F_s=\bigsqcup_{m\ge k}\{t\in\N^m:\pi_{k,m}(t)=s\}.
\]
The $F_s$ form a partition of $\bigsqcup_{m\ge k}\N^m$ into countably infinite sets, and
\[
M\in\mathcal L_k
\quad\Longleftrightarrow\quad
\{s:M\cap F_s\ne\varnothing\}\in\Fin^{\otimes k}.
\]
The possibly empty lower block $\bigsqcup_{1\le m<k}\N^m$ is unrestricted. Fix $s_0\in\N^k$, map each $F_s$ for $s\ne s_0$ bijectively onto $\{s\}\times\N$, and map $F_{s_0}$ together with the lower block bijectively onto $\{s_0\}\times\N$. Under the resulting bijection, the support of the image of $M$ differs from $\{s:M\cap F_s\ne\varnothing\}$ by at most the singleton $\{s_0\}$. Since $\Fin^{\otimes k}$ contains all finite sets, this is an ideal isomorphism from $\mathcal L_k$ onto $\widehat{\Fin^{\otimes k}}$.
\end{proof}

\begin{theorem}\label{thm:Finomega-limit}
The ideal $\Fin_\omega$ has local HCP-finiteness, does not have $\pminus$, and has separation rank $\omega$.
\end{theorem}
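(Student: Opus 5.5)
The theorem has three parts, and I would prove them in the order: local HCP-finiteness, failure of $\pminus$, separation rank.

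\emph{Local HCP-finiteness.} This should follow from the increasing-union route, exactly as for $\mathcal H_{<\omega}$. By Lemma~\ref{lem:Finomega-stages} the stages satisfy $\mathcal L_1\subseteq\mathcal L_2\subseteq\cdots$ and $\mathcal L_k\cong\widehat{\Fin^{\otimes k}}$. Their union $\Fin_\omega$ is proper by Kwela's result. Each stage is admissible, since it is isomorphic to the support lift of an admissible ideal (Proposition~\ref{prop:support-lift}).

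\begin{itemize}
\item Corollary~\ref{cor:Fin-powers} gives $\HCF(\Fin^{\otimes k})$, and Proposition~\ref{prop:support-lift}(a) transfers this to $\widehat{\Fin^{\otimes k}}$.
\item Homogeneity of $\Fin^{\otimes k}$ (Kwela--Tryba) gives $K$-uniformity, and Proposition~\ref{prop:support-lift}(b) transfers this to the lift.
\item Both properties are invariant under ideal isomorphism, so every $\mathcal L_k$ is locally HCP-finite and $K$-uniform.
\end{itemize}

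Theorem~\ref{thm:union-lifting} then yields $\HCF(\Fin_\omega)$.

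\emph{Failure of $\pminus$.} I would apply Lemma~\ref{lem:block-pminus-obstruction} to a block decomposition of $D_\omega$ that is already controlled at stage $\mathcal L_1$.

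\begin{itemize}
\item For $s\in\N$, take $C_s=F_s$ as in the proof of Lemma~\ref{lem:Finomega-stages} with $k=1$, that is, all tuples whose last coordinate is $s$. Enlarge $C_0$ by nothing, since for $k=1$ the lower block is empty.
\item Each $C_s$ lies in $\mathcal L_2$. Its projection onto the last two coordinates is contained in $\N\times\{s\}$, and this set belongs to $\Fin\otimes\Fin$ when the first coordinate is the outer one. I still need to check the orientation of the Fubini convention against Kwela's projection onto the last coordinates.
\item If that orientation is wrong, I would instead use $C_t$ for $t\in\N^2$, the tuples whose last two coordinates are $t$, and work inside $\mathcal L_2\cong\widehat{\Fin\otimes\Fin}$ directly.
\item Now let $B$ meet every $C_s$ finitely. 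Then $\pi_{1,m}[B\cap\N^m]$ is contained in the set of $s$ with $B\cap C_s\neq\varnothing$, which may well be infinite. So this choice of blocks fails.
\end{itemize}

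A better choice is to take the blocks $C_i=\{t:\text{the second-to-last coordinate of }t\text{ is } i\}$, together with $\N^1$ added to $C_0$.

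\begin{itemize}
\item A set meeting every $C_i$ finitely has, at level $2$, projection $B'\subseteq\N^2$ with finite sections in the relevant coordinate. Hence $B'\in\Fin\otimes\Fin$ in the appropriate orientation, and the set lies in $\mathcal L_2$.
\item Each $C_i$ projects to a single row, which is in $\Fin^{\otimes2}$. Adding $\N^1$ is harmless, because $\mathcal L_2$ does not restrict the lower block.
\end{itemize}

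This orientation bookkeeping is the main obstacle for this part. As a fallback, I would transport the failure from Corollary~\ref{cor:Fin-powers} along the isomorphism in the same way as in Theorem~\ref{thm:Hlimit}.

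\emph{Separation rank $\omega$.} I do not expect to reprove this; I would cite Debs--Saint Raymond \cite[Section~6.2]{DebsSaintRaymond2009} and Kwela \cite{Kwela2021}. The citation must use the dual-ideal convention of Section~\ref{sec:prelim}. As a consistency check, I would verify both bounds.

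\begin{itemize}
\item \emph{Upper bound.} Each $\mathcal L_k$ is Borel of finite rank, so $\Fin_\omega$ is a countable union and is $\boldsymbol\Sigma^0_\omega$-separable from its dual filter.
\item \emph{Lower bound.} $\Fin^{\otimes k}\le_K\Fin_\omega$ for every $k$, through the projection maps.
\end{itemize}
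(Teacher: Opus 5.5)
Your proposal is correct and follows the paper's proof in all three parts. It uses the same increasing-union argument over the stages $\mathcal L_k\cong\widehat{\Fin^{\otimes k}}$, the same row blocks for Lemma~\ref{lem:block-pminus-obstruction}, and the same two rank bounds. Your second-to-last-coordinate blocks are the paper's row blocks written directly in $D_\omega$. The orientation you worried about is the right one: $\pi_{2,m}$ puts the second-to-last coordinate in the outer Fubini position, as the witness $\N\times B$ in Lemma~\ref{lem:Finomega-stages} shows.

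For the rank, make your ``consistency check'' the actual proof, as the paper does, because of the limit-stage domain ambiguity that Kwela notes. Two details are missing:
\begin{enumerate}
\item For the upper bound, get the finite Borel class of each $\mathcal L_k$ from the Baire-class-one support map together with the finite Borel class of $\Fin^{\otimes k}$.
\item For the lower bound, note that the Kat\v{e}tov map $f$ (a projection) gives the continuous map $A\mapsto f^{-1}[A]$. This map sends $\Fin^{\otimes k}$ into $\Fin_\omega$ and its dual filter into $\Fin_\omega^\star$, so it transfers separators; combine this with $\operatorname{rk}(\Fin^{\otimes k})=k$.
\end{enumerate}
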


\begin{proof}
By Corollary~\ref{cor:Fin-powers}, every $\Fin^{\otimes k}$ has local HCP-finiteness. Kwela--Tryba \cite[Example~2.3 and Proposition~2.9]{KwelaTryba2017} show that every finite Fubini power of $\Fin$ is homogeneous and hence $K$-uniform. Proposition~\ref{prop:support-lift} therefore shows that $\widehat{\Fin^{\otimes k}}$ is admissible, has local HCP-finiteness, and is $K$-uniform. Lemma~\ref{lem:Finomega-stages} and ideal-isomorphism invariance transfer these properties to every $\mathcal L_k$. Theorem~\ref{thm:union-lifting} gives
\[
\HCF(\Fin_\omega).
\]

For the failure of $\pminus$, let $\psi:\N^2\times\N\to D_\omega$ witness $\widehat{\Fin\otimes\Fin}\cong\mathcal L_2$ and put
\[
C_i=\psi[\{((i,j),r):j,r\in\N\}].
\]
For each $i$, the support of $\psi^{-1}[C_i]$ is contained in the row with first coordinate $i$, so $C_i\in\mathcal L_2$. If $B\subseteq D_\omega$ meets every $C_i$ finitely, then each vertical section of $\operatorname{supp}(\psi^{-1}[B])$ is finite. Hence $\psi^{-1}[B]\in\widehat{\Fin\otimes\Fin}$ and $B\in\mathcal L_2\subseteq\Fin_\omega$. Lemma~\ref{lem:block-pminus-obstruction} shows that $\Fin_\omega$ does not have $\pminus$.

Debs--Saint Raymond give separation rank $\omega$ at their first limit stage \cite[Theorem~6.5]{DebsSaintRaymond2009}. Kwela \cite[Subsection~2.4]{Kwela2021} discusses a domain ambiguity in the original limit-stage presentation, so the full-domain convention used here is checked separately. For the upper bound, the support map $A\mapsto\operatorname{supp}(A)$ is Baire class one as the pointwise limit of its continuous second-coordinate truncations. Every finite Fubini power has finite Borel complexity \cite[Proposition~6.4]{DebsSaintRaymond2009}, as also noted by Kwela \cite[p.~1]{Kwela2021}. Since preimages of finite-level Borel sets under Baire class one maps are again of finite Borel class, each support lift $\widehat{\Fin^{\otimes k}}$ is of finite Borel class. The carrier bijection in Lemma~\ref{lem:Finomega-stages} induces a homeomorphism of the corresponding power-set spaces, so every $\mathcal L_k$ is of finite Borel class. Hence
\[
\Fin_\omega=\bigcup_{k\ge1}\mathcal L_k\in\boldsymbol\Sigma^0_\omega.
\]
Thus $\Fin_\omega$ is analytic. As a proper ideal, it is disjoint from its dual filter, so it is itself a $\boldsymbol\Sigma^0_\omega$ separator and $\operatorname{rk}(\Fin_\omega)\le\omega$.

For the lower bound, fix $k\ge1$ and define $\Phi_k:2^{\N^k}\to2^{D_\omega}$ by
\[
\Phi_k(A)=\bigsqcup_{m\ge k}\pi_{k,m}^{-1}[A].
\]
Each coordinate of $\Phi_k(A)$ depends on a single coordinate of $A$, so $\Phi_k$ is continuous. If $A\in\Fin^{\otimes k}$, then $\Phi_k(A)\in\mathcal L_k\subseteq\Fin_\omega$, with witness $A$. If $A\in(\Fin^{\otimes k})^\star$, then $\N^k\setminus A\in\Fin^{\otimes k}$ and
\[
D_\omega\setminus\Phi_k(A)
=\bigsqcup_{1\le m<k}\N^m\;\sqcup\;\Phi_k(\N^k\setminus A)
\in\mathcal L_k,
\]
with witness $\N^k\setminus A$. Hence $\Phi_k(A)\in\Fin_\omega^\star$. Therefore, if $\alpha<\omega_1$ and $S\in\boldsymbol\Sigma^0_{1+\alpha}$ separates $\Fin_\omega$ from $\Fin_\omega^\star$, then $\Phi_k^{-1}[S]\in\boldsymbol\Sigma^0_{1+\alpha}$ separates $\Fin^{\otimes k}$ from $(\Fin^{\otimes k})^\star$. Hence
\[
\operatorname{rk}(\Fin^{\otimes k})\le\operatorname{rk}(\Fin_\omega).
\]
Debs--Saint Raymond's finite-stage computation \cite[Theorem~6.5]{DebsSaintRaymond2009} gives $\operatorname{rk}(\Fin^{\otimes k})=k$. Hence $\operatorname{rk}(\Fin_\omega)\ge k$ for every $k\ge1$, so $\operatorname{rk}(\Fin_\omega)\ge\omega$, and equality follows.
\end{proof}

\section{Conclusion}\label{sec:final}

The local point-discrete countability lemma separates the metrization problem from the stronger finiteness theory. For every admissible ideal $\I$, every point-discrete family of $\I$-sequential neighborhoods of a point $x$ is countable whenever there is an $\I$-convergent sequence to $x$ with $\I$-positive support away from $x$. If no such sequence exists at $x$, the singleton $\{x\}$ is an $\I$-sequential neighborhood. These two alternatives give $\I$-$snf$-countability for every space carrying a $\sigma$-point-discrete $\I$-$sn$-network. In the regular $\sigma$-HCP case, Ge's characterization of ordinary $sn$-metrizability \cite[Lemma~2.2]{Ge2003} and Zhou--Liu--Liu--Lin's characterization of $\I$-$sn$-metrizability \cite[Theorem~3.6]{ZLLL2024} then settle Problem~5.2 \cite[Problem~5.2]{ZLLL2024} affirmatively for every admissible ideal.

The countability conclusion cannot in general be strengthened to finiteness, as Theorem~\ref{thm:HCF-failure} gives an admissible ideal for which local HCP-finiteness fails. The Fubini theorems show that local HCP-finiteness passes from an inner ideal to a product when the outer ideal has $\pminus$. The heterogeneous theorem allows the row ideals to vary under cross-row Kat\v{e}tov absorption. The increasing-union theorem gives a second permanence principle through $K$-uniform stages. As a result, local HCP-finiteness holds for the finite Fubini powers of $\Fin$, for $\Fin^{\omega+r}$ with $r<\omega$, and for the rank-$\omega$ ideals $\mathcal H_{<\omega}$ and $\Fin_\omega$. Among these ideals, $\Fin^{\otimes r}$ for $r\ge2$, every $\Fin^{\omega+r}$ with $r<\omega$, $\mathcal H_{<\omega}$, and $\Fin_\omega$ fail $\pminus$.

The remaining structural problem is to characterize the admissible ideals satisfying local HCP-finiteness. Two related tasks remain. One is to determine whether local HCP-finiteness admits a direct description in terms of standard selection or Kat\v{e}tov-theoretic properties of ideals. The second asks which hypotheses in the permanence theorems can be weakened.


\begin{thebibliography}{99}
\sloppy
\hbadness=3000

\bibitem{BarbarskiEtAl2013}
P.~Barbarski, R.~Filip\'ow, N.~Mro\.zek, and P.~Szuca,
\newblock When does the Kat\v{e}tov order imply that one ideal extends the other?,
\newblock \emph{Colloq. Math.} \textbf{130} (2013), no.~1, 91--102.
\newblock doi:10.4064/cm130-1-9.

\bibitem{BEL1975}
D.~K. Burke, R.~Engelking, and D.~Lutzer,
\newblock Hereditarily closure-preserving collections and metrization,
\newblock \emph{Proc. Amer. Math. Soc.} \textbf{51} (1975), 483--488.
\newblock doi:10.1090/S0002-9939-1975-0370519-6.

\bibitem{CamargoUzcategui2018}
J.~Camargo and C.~Uzc\'ategui,
\newblock Selective separability on spaces with an analytic topology,
\newblock \emph{Topology Appl.} \textbf{248} (2018), 176--191.
\newblock doi:10.1016/j.topol.2018.09.002.

\bibitem{DebsSaintRaymond2009}
G.~Debs and J.~Saint Raymond,
\newblock Filter descriptive classes of Borel functions,
\newblock \emph{Fund. Math.} \textbf{204} (2009), no.~3, 189--213.
\newblock doi:10.4064/fm204-3-1.

\bibitem{Engelking1989}
R.~Engelking,
\newblock \emph{General Topology},
\newblock revised and completed edition, Sigma Series in Pure Mathematics, vol.~6,
Heldermann Verlag, Berlin, 1989.

\bibitem{Foged1985}
L.~Foged,
\newblock A characterization of closed images of metric spaces,
\newblock \emph{Proc. Amer. Math. Soc.} \textbf{95} (1985), no.~3, 487--490.
\newblock doi:10.1090/S0002-9939-1985-0806093-3.

\bibitem{Ge2002}
Y.~Ge,
\newblock On $sn$-metrizable spaces,
\newblock \emph{Acta Math. Sinica (Chin. Ser.)} \textbf{45} (2002), no.~2, 355--360 (in Chinese).
\newblock doi:10.12386/A2002sxxb0045.

\bibitem{Ge2003}
Y.~Ge,
\newblock Characterizations of $sn$-metrizable spaces,
\newblock \emph{Publ. Inst. Math. (Beograd) (N.S.)} \textbf{74(88)} (2003), 121--128.
\newblock doi:10.2298/PIM0374121G.

\bibitem{Ge2004}
Y.~Ge,
\newblock Spaces with countable $sn$-networks,
\newblock \emph{Comment. Math. Univ. Carolin.} \textbf{45} (2004), no.~1, 169--176.

\bibitem{GeLin2007}
Y.~Ge and S.~Lin,
\newblock $g$-metrizable spaces and the images of semi-metric spaces,
\newblock \emph{Czechoslovak Math. J.} \textbf{57(132)} (2007), no.~4, 1141--1149.
\newblock doi:10.1007/s10587-007-0117-x.

\bibitem{GeShenYing2007}
X.~Ge, J.~Shen, and Y.~Ge,
\newblock Spaces with $\sigma$-weakly hereditarily closure-preserving $sn$-networks,
\newblock \emph{Novi Sad J. Math.} \textbf{37} (2007), no.~1, 33--37.

\bibitem{Gruenhage1984}
G.~Gruenhage,
\newblock Generalized metric spaces,
\newblock in \emph{Handbook of Set-Theoretic Topology}, K.~Kunen and J.~E. Vaughan, eds.,
North-Holland, Amsterdam, 1984, pp.~423--501.
\newblock doi:10.1016/B978-0-444-86580-9.50013-6.

\bibitem{Hrusak2011}
M.~Hru\v{s}\'ak,
\newblock Combinatorics of filters and ideals,
\newblock in \emph{Set Theory and Its Applications}, Contemp. Math. \textbf{533},
Amer. Math. Soc., Providence, RI, 2011, pp.~29--69.
\newblock doi:10.1090/conm/533/10503.

\bibitem{HrusakEtAl2017}
M.~Hru\v{s}\'ak, D.~Meza-Alc\'antara, E.~Th\"ummel, and C.~Uzc\'ategui,
\newblock Ramsey type properties of ideals,
\newblock \emph{Ann. Pure Appl. Logic} \textbf{168} (2017), no.~11, 2022--2049.
\newblock doi:10.1016/j.apal.2017.06.001.

\bibitem{JunnilaYun1992}
H.~J.~K. Junnila and Z.~Yun,
\newblock $\aleph$-spaces and spaces with a $\sigma$-hereditarily closure-preserving $k$-network,
\newblock \emph{Topology Appl.} \textbf{44} (1992), 209--215.
\newblock doi:10.1016/0166-8641(92)90096-I.

\bibitem{Katetov1972}
M.~Kat\v{e}tov,
\newblock On descriptive classification of functions,
\newblock in \emph{General Topology and its Relations to Modern Analysis and Algebra, III},
Proceedings of the Third Prague Topological Symposium, 1971,
Academia, Prague, 1972, pp.~235--242.

\bibitem{KSW2001}
P.~Kostyrko, T.~\v{S}al\'at, and W.~Wilczy\'nski,
\newblock $\I$-convergence,
\newblock \emph{Real Anal. Exchange} \textbf{26} (2000/2001), no.~2, 669--686.
\newblock doi:10.2307/44154069.

\bibitem{Kwela2021}
A.~Kwela,
\newblock Inductive limits of ideals,
\newblock \emph{Topology Appl.} \textbf{300} (2021), Paper No.~107798, 13~pp.
\newblock doi:10.1016/j.topol.2021.107798.

\bibitem{KwelaLesnerTryba2026}
A.~Kwela, D.~Lesner, and J.~Tryba,
\newblock $\mathcal I$-closed sets and $\mathcal I$-continuous functions,
\newblock \emph{Topology Appl.} \textbf{390} (2026), Paper No.~109877.
\newblock doi:10.1016/j.topol.2026.109877.

\bibitem{KwelaTryba2017}
A.~Kwela and J.~Tryba,
\newblock Homogeneous ideals on countable sets,
\newblock \emph{Acta Math. Hungar.} \textbf{151} (2017), no.~1, 139--161.
\newblock doi:10.1007/s10474-016-0669-z.

\bibitem{LahiriDas2005}
B.~K. Lahiri and P.~Das,
\newblock $\I$- and $\I^{\ast}$-convergence in topological spaces,
\newblock \emph{Math. Bohem.} \textbf{130} (2005), no.~2, 153--160.
\newblock doi:10.21136/MB.2005.134133.

\bibitem{LinShen2010}
F.~C.~Lin and R.~X.~Shen,
\newblock Some notes on $\sigma$-point-discrete $sn$-networks,
\newblock \emph{Adv. Math. (China)} \textbf{39} (2010), no.~2, 212--216 (in Chinese).

\bibitem{Lin1996}
S.~Lin,
\newblock On sequence-covering $s$-mappings,
\newblock \emph{Adv. Math. (China)} \textbf{25} (1996), no.~6, 548--551 (in Chinese).

\bibitem{Lin2021}
S.~Lin,
\newblock On $\mathcal I$-neighborhood spaces and $\mathcal I$-quotient spaces,
\newblock \emph{Bull. Malays. Math. Sci. Soc.} \textbf{44} (2021), no.~4, 1979--2004.
\newblock doi:10.1007/s40840-020-01043-1.

\bibitem{LinGe2019}
S.~Lin and Y.~Ge,
\newblock Compact-covering and 1-sequence-covering images of metric spaces,
\newblock \emph{Houston J. Math.} \textbf{45} (2019), no.~1, 293--305.

\bibitem{LinYan2001}
S.~Lin and P.~Yan,
\newblock Sequence-covering maps of metric spaces,
\newblock \emph{Topology Appl.} \textbf{109} (2001), no.~3, 301--314.
\newblock doi:10.1016/S0166-8641(99)00163-7.

\bibitem{LinYun2016}
S.~Lin and Z.~Yun,
\newblock \emph{Generalized Metric Spaces and Mappings},
\newblock Atlantis Studies in Mathematics, vol.~6,
Atlantis Press, Paris, 2016.
\newblock doi:10.2991/978-94-6239-216-8.

\bibitem{LinZhang2024}
S.~Lin and J.~Zhang,
\newblock Recent progress on point-countable covers and sequence-covering mappings,
\newblock \emph{Axioms} \textbf{13} (2024), no.~10, Article~728.
\newblock doi:10.3390/axioms13100728.

\bibitem{Liu1993}
C.~Liu,
\newblock Spaces with a $\sigma$-hereditarily closure-preserving $k$-network,
\newblock \emph{Topology Proc.} \textbf{18} (1993), 179--188.

\bibitem{LiuLinLi2012}
C.~Liu, S.~Lin, and J.~Li,
\newblock Some properties on $\aleph_0$-weak bases,
\newblock \emph{Topology Proc.} \textbf{39} (2012), 195--208.

\bibitem{LiuLinZhou2024}
X.~Liu, S.~Lin, and X.~Zhou,
\newblock A study of spaces and mappings in the sense of ideal convergence,
\newblock \emph{Filomat} \textbf{38} (2024), no.~20, 7101--7110.
\newblock doi:10.2298/FIL2420101L.

\bibitem{Luo2005}
Z.~Luo,
\newblock $sn$-metrizable spaces and related matters,
\newblock \emph{Int. J. Math. Math. Sci.} \textbf{2005} (2005), no.~16, 2523--2531.
\newblock doi:10.1155/IJMMS.2005.2523.

\bibitem{MS2011}
M.~Ma\v{c}aj and M.~Sleziak,
\newblock $\I^{K}$-convergence,
\newblock \emph{Real Anal. Exchange} \textbf{36} (2010/2011), no.~1, 177--194.
\newblock doi:10.14321/realanalexch.36.1.0177.

\bibitem{PelayoGomez2026}
J.~de J. Pelayo G\'omez,
\newblock Tree-derived ideals: Fubini iterations, limit amalgamations, and Kat\v{e}tov obstructions,
\newblock arXiv:2607.16572v1 [math.LO], 2026.
\newblock doi:10.48550/arXiv.2607.16572.

\bibitem{ShelahRudin1978}
S.~Shelah and M.~E. Rudin,
\newblock Unordered types of ultrafilters,
\newblock \emph{Topology Proc.} \textbf{3} (1978), no.~1, 199--204.

\bibitem{Tanaka1991}
Y.~Tanaka,
\newblock $\sigma$-hereditarily closure-preserving $k$-networks and $g$-metrizability,
\newblock \emph{Proc. Amer. Math. Soc.} \textbf{112} (1991), no.~1, 283--290.
\newblock doi:10.1090/S0002-9939-1991-1049850-6.

\bibitem{Uzcategui2019}
C.~Uzc\'ategui Aylwin,
\newblock Ideals on countable sets: a survey with questions,
\newblock \emph{Rev. Integr. Temas Mat.} \textbf{37} (2019), no.~1, 167--198.
\newblock doi:10.18273/revint.v37n1-2019009.

\bibitem{ZhangZhang2021}
H.~Zhang and S.~Zhang,
\newblock Some applications of the theory of Kat\v{e}tov order to ideal convergence,
\newblock \emph{Topology Appl.} \textbf{301} (2021), Paper No.~107545.
\newblock doi:10.1016/j.topol.2020.107545.

\bibitem{ZhouLin2022}
X.~Zhou and S.~Lin,
\newblock On $\mathcal I$-covering images of metric spaces,
\newblock \emph{Filomat} \textbf{36} (2022), no.~19, 6621--6629.
\newblock doi:10.2298/FIL2219621Z.

\bibitem{ZLZ2023}
X.~Zhou, S.~Lin, and H.~Zhang,
\newblock $\I_{sn}$-sequential spaces and the images of metric spaces,
\newblock \emph{Topology Appl.} \textbf{327} (2023), Paper No.~108439.
\newblock doi:10.1016/j.topol.2023.108439.

\bibitem{ZLLL2024}
X.~Zhou, F.~Liu, L.~Liu, and S.~Lin,
\newblock $\I$-$sn$-metrizable spaces and the images of semi-metric spaces,
\newblock \emph{Open Mathematics} \textbf{22} (2024), no.~1, Article 20240053, 13~pp.
\newblock doi:10.1515/math-2024-0053.

\bibitem{ZhouLiu2020}
X.~Zhou and L.~Liu,
\newblock On $\I$-covering mappings and 1-$\I$-covering mappings,
\newblock \emph{J. Math. Res. Appl.} \textbf{40} (2020), no.~1, 47--56.
\newblock doi:10.3770/j.issn:2095-2651.2020.01.005.

\bibitem{ZhouLiuLin2020}
X.~Zhou, L.~Liu, and S.~Lin,
\newblock On topological spaces defined by $\mathcal I$-convergence,
\newblock \emph{Bull. Iran. Math. Soc.} \textbf{46} (2020), no.~3, 675--692.
\newblock doi:10.1007/s41980-019-00284-6.

\end{thebibliography}
\end{document}